\documentclass[12pt]{amsart}

\usepackage{amsfonts}
\usepackage{amscd}
\usepackage{amssymb}

\allowdisplaybreaks
\usepackage{graphicx}
\usepackage{xcolor}
\usepackage{comment}
\usepackage{array}
\usepackage{tikz-cd}
\usepackage{cite}
\usepackage{enumerate,enumitem,hyperref,cleveref}
\hypersetup{colorlinks,linkcolor={blue},citecolor={blue},urlcolor={blue}}

\newtheorem{theorem}{Theorem}[section]
\newtheorem{proposition}[theorem]{Proposition}
\newtheorem{lemma}[theorem]{Lemma}
\newtheorem{corollary}[theorem]{Corollary}
\theoremstyle{definition}

\theoremstyle{remark}
\newtheorem{remark}[theorem]{Remark}

\numberwithin{equation}{section}

\usepackage{color}

\title[Dimension free weak-type endpoint estimates ...]
{Dimension free weak-type endpoint estimates for the vectors of the Dunkl--Riesz transform}

\author[S. Mukherjee]{Suman Mukherjee}

\address{Department of Mathematics, Indian Institute of Technology Bombay, Powai, Mumbai--400076, India.}
\email{sumanmukherjee822@gmail.com}

\keywords{Dunkl--Riesz transform, Fractional Dunkl Laplacian, Dunkl--Schr\"odinger Riesz transform, Obstacle method.}
\subjclass[2020]{Primary: 42B15, 42B20. Secondary: 35R11, 47G10}

\begin{document}

\begin{abstract} Let $R\subset\mathbb{R}^d$ be a reduced root system, $G$ the associated finite reflection group, and $k\ge0$ a $G$-invariant multiplicity function. We develop a Dunkl analogue of the obstacle/partial-balayage method of Ouyang, Spector, and Stockdale (\href{https://arxiv.org/abs/2608.18068}{https://arxiv.org/abs\\
/2608.18068}) for the Euclidean fractional Laplacian. For $0<s<2$, nonnegative $f\in L^1(\nu_k)\cap L^2(\nu_k)$, and $\lambda>0$, we obtain a decomposition \[ f=\mu+(-\Delta_k)^{s/2}u, \qquad 0\le\mu\le\lambda, \] with $\mu=\lambda$ on $\Omega=\{u>0\}$ and \[ \lambda\nu_k(\Omega)\le\|f\|_{L^1(\nu_k)}. \] As an application, we prove that the vector Dunkl--Riesz transform $\mathcal R_k=\nabla_k(-\Delta_k)^{-1/2}$ is of weak type $(1,1)$ with constant at most $(M_k+2)$, where \[ M_k=\#\{\alpha\in R_+:k(\alpha)>0\}. \] For $G$-invariant functions, the reflection terms vanish and the same argument gives the universal constant $2$. We further establish a dimension-free weak-type $(1,1)$ estimate for the Dunkl--Schr\"odinger Riesz transform. 
\end{abstract}
\maketitle

\section{Introduction and main results}

\subsection{Classical motivation}

The Riesz transforms occupy a central place in harmonic analysis, connecting singular
integral theory, Sobolev spaces, potential theory, and probabilistic methods.  In
Euclidean space $\mathbb{R}^d$, the $j$th Riesz transform is the Fourier multiplier
\[
   \widehat{R_jf}(\xi)=-i\frac{\xi_j}{|\xi|}\widehat f(\xi),
\]
and the full vector transform $\mathcal{R}f=(R_1f,\dots,R_df)$ satisfies strong $L^p$ estimates,
$1<p<\infty$, with constants which can be chosen independently of the dimension; see,
for example, \cite{Stein1983,BanuelosWang1995}.  The endpoint $p=1$ is substantially
more delicate.  A question raised by Stein at the 1986 International Congress of
Mathematicians asked whether the weak-type $(1,1)$ norm of the Riesz transforms admits
a bound independent of the dimension \cite{Stein1987}.  Before the recent work of Ouyang, Spector, and
Stockdale, the best general dimensional estimate for an individual Riesz transform was
of order $\log (d)$, due to Janakiraman \cite{Janakiraman2004}; see also
\cite{SpectorStockdale2021} for related progress and historical discussion.

Ouyang, Spector, and Stockdale \cite{OSS2026} gave an affirmative answer in a stronger
vector-valued form: for every $f\in L^1(\mathbb{R}^d)$,
\begin{equation}\label{eq:classical-OSS}
   \|\mathcal{R}f\|_{L^{1,\infty}(\mathbb{R}^d;\ell^2)}\le 2\|f\|_{L^1(\mathbb{R}^d)}.
\end{equation}
The decisive point in their argument is not a
refinement of the classical Calder\'on--Zygmund decomposition.  Instead, they construct
a decomposition adapted to the structural identity
\[
   \mathcal{R}=\nabla(-\Delta)^{-1/2}.
\]
More precisely, a nonnegative function $f$ is decomposed at height $\lambda$ as
\[
   f=\mu+(-\Delta)^{1/2}u,
\]
where $0\le \mu\le\lambda$, the mass of $\mu$ agrees with that of $f$, and the
``potential part'' satisfies $\nabla u=0$ outside the positivity set
$\Omega=\{u>0\}$.  The set $\Omega$ has size at most $\lambda^{-1}\|f\|_1$.  Thus the
bad part is localized geometrically rather than estimated by annular kernel
integrations.  This is precisely the feature that makes the constant in
\eqref{eq:classical-OSS} dimension-free.  The decomposition belongs naturally to the
theory of obstacle problems and variational inequalities and is closely related to
partial balayage and the divisible-sandpile odometer; see
\cite{KinderlehrerStampacchia1980,ServadeiValdinoci2013,OSS2026}.

The purpose of the present paper is to investigate how far this new endpoint mechanism
survives in rational Dunkl analysis.  The answer is twofold.  The obstacle and
Lewy--Stampacchia part of the argument extends remarkably well to the fractional Dunkl
Laplacian, but the final localization step detects a genuinely Dunkl phenomenon: the
first-order Dunkl operators are differential--difference operators and therefore see
reflected values of the potential.  This produces a reflection enlargement of the
positivity set for general data, while the Euclidean localization is recovered exactly
for reflection-invariant data.

\subsection{Riesz transforms in the rational Dunkl setting}

Dunkl analysis originated in the work of C.~F.~Dunkl \cite{Dunkl1989}, who introduced
a commuting family of first-order differential--difference operators attached to a
finite reflection group.  The associated transform theory, developed in particular by
de Jeu \cite{deJeu1993} and R\"osler \cite{Rosler2003}, provides a deformation of
Euclidean Fourier analysis which retains a Fourier multiplier calculus while encoding
the geometry of a root system.

Let $R\subset \mathbb{R}^{d}\setminus\{0\}$ be a reduced root system. Thus, $R$ is a finite set of nonzero vectors, called roots, which is invariant under reflection across the hyperplane orthogonal to each of its elements. More precisely, for $\alpha\in R$, define
\[
\sigma_{\alpha}(x)
   =x-2\frac{\langle x,\alpha\rangle}{|\alpha|^{2}}\alpha,
   \]
where $\langle\cdot,\cdot\rangle$ denotes the standard Euclidean inner product on $\mathbb{R}^{d}$. The root system condition requires that $\sigma_{\alpha}(R)=R$,
for all $\alpha\in R$. The assumption that $R$ is reduced means that no nontrivial scalar multiple of a root is again a root, that is, for every $\alpha\in R$,
\[
R\cap \mathbb{R}\alpha=\{\alpha,-\alpha\}.
\]
We fix a positive subsystem $R_{+}\subset R$, obtained by choosing exactly one element from each pair $\{\alpha,-\alpha\}$. Thus,
\[
R=R_{+}\cup(-R_{+}),
\qquad
R_{+}\cap(-R_{+})=\varnothing.
\]
The finite reflection group associated with $R$ is $G=\langle \sigma_{\alpha}:\alpha\in R\rangle$, namely, the subgroup of the orthogonal group $O(d)$ generated by the reflections $\sigma_{\alpha}$, $\alpha\in R$. For each $\alpha\in R$, the reflection $\sigma_{\alpha}$ fixes pointwise the hyperplane $H_{\alpha}
   =\{x\in\mathbb{R}^{d}:\langle x,\alpha\rangle=0\}$ and maps $\alpha$ to $-\alpha$. Throughout, we use the standard normalization $|\alpha|^{2}=2$ for all $\alpha\in R$. Let
$k:R\to[0,\infty)$ be a $G$-invariant multiplicity function and set
\[
  \gamma_k:=\sum_{\alpha\in R_+}k(\alpha),
  \qquad d_k:=d+2\gamma_k.
\]
The natural $G$-invariant weight and measure are
\[
  w_k(x):=\prod_{\alpha\in R_+}|\langle\alpha,x\rangle|^{2k(\alpha)},
  \qquad d\nu_k(x):=c_k^{-1}w_k(x)\,dx,
\]
where the normalization $c_k$ is given by
\[c_k=\int_{\mathbb{R}^d} e^{-\frac{|x|^2}{2}}\,w_k(x)\,dx.\]
The number $d_k$ plays the role of a homogeneous dimension, in
the sense that
\[
\nu_k(rE)=r^{d_k}\nu_k(E),\qquad \text{ for any } E\subseteq \mathbb{R}^d\text{ and for any }r>0.
\]
For $\xi\in\mathbb{R}^d$, the Dunkl operator is given by
\begin{equation}\label{eq:intro-Dunkl-op}
  T_\xi f(x)=\partial_\xi f(x)
  +\sum_{\alpha\in R_+}k(\alpha)\langle\alpha,\xi\rangle
       \frac{f(x)-f(\sigma_\alpha x)}{\langle\alpha,x\rangle}.
\end{equation}
Writing $T_j=T_{e_j}$, we set
\[
  \nabla_k f=(T_1f,\dots,T_df),
  \qquad \Delta_k=\sum_{j=1}^dT_j^2,
  \qquad \text{and } A_k:=-\Delta_k.
\]
The Dunkl transform $\mathcal F_k$ (see Section \ref{subsec: Dunkl transform} for the definition) diagonalizes these operators; that is,
\[
  \mathcal F_k(T_jf)(\xi)=i\xi_j\mathcal F_kf(\xi)
  \ \text{and }
  \mathcal F_k(A_k^{s/2}f)(\xi)=|\xi|^s\mathcal F_kf(\xi), \ \text{for }s \in \mathbb{R}.
\]
Accordingly, the vector of the Dunkl--Riesz transform is defined by
\[
  \mathcal R_k f=\nabla_k A_k^{-1/2}f
  =(R_{k,1}f,\dots,R_{k,d}f),
\]
and Dunkl-Plancherel theorem gives the exact $L^2$ identity
\begin{equation}\label{eq:L2-isometry}
   \|\mathcal R_k f\|_{L^2(\nu_k;\ell^2)}=\|f\|_{L^2(\nu_k)}.
\end{equation}

The $L^p$ theory of these transforms has developed over the last two decades.
Thangavelu and Xu \cite{ThangaveluXu2007} introduced and studied Dunkl analogues of
Riesz transforms and Riesz potentials, obtaining boundedness in the special situation where $G\equiv \mathbb{Z}^2_d$.
Amri and Sifi \cite{AmriSifi2012} subsequently proved $L^p(\nu_k)$ boundedness of each
Riesz transform for the full range $1<p<\infty$ and arbitrary finite reflection groups.
Their proof adapts Calder\'on--Zygmund theory to the orbit geometry of the Dunkl
setting; in particular, it also yields componentwise weak type $(1,1)$, with constants
coming from the underlying Dunkl singular-integral structure.  Endpoint Hardy-space
theory was developed further by Anker, Dziuba\'nski, and Hejna
\cite{AnkerDziubanskiHejna2019}, who characterized the rational Dunkl Hardy space
$H^1$ by Riesz transforms, maximal functions, square functions, and atoms.

The question of dimension-free strong estimates for the vectors of Dunkl--Riesz transform was
addressed by Hejna \cite{Hejna2023}.  Using a Bellman-function argument adapted to
Dunkl operators, she obtained strong $L^p$ estimates whose dependence on the ambient
dimension is removed; for general data the stated bound retains dependence on the
total multiplicity, whereas for $G$-invariant data the bound is independent of the
root system and the multiplicity function. More precisely, she proved that, for $1<p<\infty$,
\begin{equation}\label{hejna results1}
 \left\lVert \mathcal{R}_{k} f\right\rVert_{L^p(\nu_{k};\ell^2)}
 \leq 144\big(\max\big\{p, \frac{p}{p-1}\big\} -1\big)\big(2\sum_{\alpha\in R_+}k(\alpha)+ 2^7\big)
 \left\lVert f\right\rVert_{L^p(\nu_{k})},
 \end{equation}
and, if $f$ is $G$-invariant, then
\begin{equation}\label{hejna results2}
 \left\lVert \mathcal{R}_{k} f\right\rVert_{L^p(\nu_{k};\ell^2)}
 \leq 144\big(\max\big\{p, \frac{p}{p-1}\big\} -1\big)\left\lVert f\right\rVert_{L^p(\nu_{k})}.
 \end{equation}

Thus, prior to the present work, the Dunkl framework already provided both
dimension-dependent Calder\'on--Zygmund endpoint weak-type estimates and
dimension-free strong-type estimates for $1<p<\infty$. Our objective is to
determine whether a dimension-free weak-type estimate can also be established
for the Dunkl Riesz transform at $p=1$. More precisely, we ask whether the new
obstacle-decomposition mechanism underlying \eqref{eq:classical-OSS} can be
transferred to the setting of Dunkl analysis and what conclusions this approach
yields at the weak endpoint.

\subsection{Fractional Dunkl Laplacians and the obstacle mechanism}

We prove that, for $0<s<2$ the quadratic form associated with $A_k^{s/2}$ has a positive
jump-kernel representation of the schematic form
\begin{equation}\label{eq:intro-jump-form}
  \mathcal E_{k,s}(v,w)
   =\frac12\iint_{\mathbb{R}^d\times\mathbb{R}^d}
       (v(x)-v(y))(w(x)-w(y))
       \mathcal J_{k,s}(x,y)\,d\nu_k(x)d\nu_k(y),
\end{equation}
where $\mathcal J_{k,s}(x,y)\ge0$ is symmetric.  This positivity is the key
structural substitute for the classical Gagliardo form.  It implies the Markov
property under $1$-Lipschitz truncations and, consequently, the sign inequalities
needed in the Lewy--Stampacchia argument.

This observation suggests that the variational core of \cite{OSS2026} is not tied to
translation invariance.  We therefore minimize, for nonnegative
$f\in L^1(\nu_k)\cap L^2(\nu_k)$ and $\lambda>0$, the functional
\begin{equation}\label{eq:intro-energy}
  J_{k,s,\lambda}(v)
   :=\frac12\mathcal E_{k,s}(v)
      -\int_{\mathbb{R}^d}(f-\lambda)v\,d\nu_k
\end{equation}
over the positive cone in the natural fractional Dunkl energy space.  The minimizer
$u$ satisfies a variational inequality, and the distribution
\[
   \eta:=A_k^{s/2}u-(f-\lambda)
\]
is a positive measure.  Testing the variational inequality with the positive part of
$u-\varepsilon\varphi$ yields the Dunkl Lewy--Stampacchia estimate
\begin{equation}\label{eq:intro-LS}
   0\le \eta\le (\lambda-f)^+\le\lambda.
\end{equation}
Defining $\mu=\lambda-\eta$ then produces a mass-preserving partial-balayage
decomposition
\[
   f=\mu+A_k^{s/2}u,
   \qquad 0\le\mu\le\lambda.
\]
As in the Euclidean argument, complementarity gives $\mu=\lambda$ on
$\Omega:=\{u>0\}$ and hence
\[
   \lambda\nu_k(\Omega)\le \|f\|_{L^1(\nu_k)}.
\]
\subsection{Main results}
Let $H_k^1$ be the Sobolev space defined by
\[
H_k^1
=
\left\{
v\in L^2(\nu_k):
\int_{\mathbb{R}^d}
(1+|\xi|^2)
|\mathcal{F}_k v(\xi)|^2\,d\nu_k(\xi)
<\infty
\right\}.
\]
The statement of our first main theorem is as follows.

\begin{theorem}\label{thm:decomp}
Let $0<s<2$, $\lambda>0$, and let
$f\in L^1(\nu_k)\cap L^2(\nu_k)$ be nonnegative.  Then there exist nonnegative
functions $\mu\in L^1(\nu_k)\cap L^\infty(\nu_k)$ and $u\in H_k^s\cap L^1(\nu_k)$, such that
\begin{equation}\label{eq:decomp}
  f=\mu+A_k^{s/2}u
\end{equation}
almost everywhere and
\begin{enumerate}[label=\textup{(\roman*)}]
  \item $0\le\mu\le\lambda$ almost everywhere and
  \[
     \|\mu\|_{L^1(\nu_k)}=\|f\|_{L^1(\nu_k)};
  \]
  \item if $\Omega:=\{x:u(x)>0\}$, then
  \[
     \mu=\lambda\quad\text{almost everywhere on }\Omega;
  \]
  \item
  \[
     \lambda\nu_k(\Omega)\le\|f\|_{L^1(\nu_k)}.
  \]
\end{enumerate}
If $s\ge1$, then $u\in H_k^1$.  Moreover, with
\begin{equation}\label{eq:omega-sharp}
  \Omega^\sharp
  :=\Omega\cup\bigcup_{\substack{\alpha\in R_+\\k(\alpha)>0}}
       \sigma_\alpha(\Omega),
\end{equation}
we have
\begin{equation}\label{eq:gradk-support}
  \nabla_k u=0\quad\text{almost everywhere on }\mathbb{R}^d\setminus\Omega^\sharp.
\end{equation}
Consequently,
\begin{equation}\label{eq:omega-sharp-measure}
  \nu_k(\Omega^\sharp)
    \le (M_k+1)\nu_k(\Omega),
  \qquad
  M_k:=\#\{\alpha\in R_+:k(\alpha)>0\}.
\end{equation}
If, in addition, $f$ is $G$-invariant, then $u$ and $\mu$ may be chosen
$G$-invariant and in that case $\Omega^\sharp=\Omega$ up to null sets.
\end{theorem}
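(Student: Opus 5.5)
We will follow the variational scheme sketched in the introduction and work in the energy space $\mathcal H:=\{v\in L^2(\nu_k): \mathcal E_{k,s}(v)<\infty\}$, where $\mathcal E_{k,s}(v)=\int|\xi|^s|\mathcal F_kv|^2\,d\nu_k$.

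\textbf{Step 1: existence of a minimizer.} We minimize $J_{k,s,\lambda}$ from \eqref{eq:intro-energy} over the cone $K=\{v\in\mathcal H: v\ge0\}$. Since $\mathcal E_{k,s}$ alone is not coercive on $L^2$, we first use a penalization, say $J+\frac\varepsilon2\|v\|_2^2$. That functional is strictly convex, coercive and weakly lower semicontinuous, so its minimizer $u_\varepsilon$ exists and is unique. Comparison with $v=0$ gives $J(u_\varepsilon)\le 0$. The linear term is controlled in two ways. First, $\int(f-\lambda)v\le\int f v$ on the support of $f$. Second, the truncation $v\wedge\tau$ competes favourably by the Markov property of the positive jump kernel \eqref{eq:intro-jump-form}. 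Together these give uniform bounds on $\mathcal E_{k,s}(u_\varepsilon)$ and on $\lambda\nu_k(\{u_\varepsilon>0\})\le\|f\|_1$. We then let $\varepsilon\to0$. If direct coercivity fails, we keep $\varepsilon>0$ throughout, prove all conclusions uniformly in $\varepsilon$, and pass to the limit weakly.

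\textbf{Step 2: variational inequality and Lewy--Stampacchia.} The minimizer satisfies
\[
\mathcal E_{k,s}(u,v-u)\ge\int(f-\lambda)(v-u)\,d\nu_k\qquad\text{for all } v\in K.
\]
Taking $v=u+\varphi$ with $\varphi\ge0$ shows that $\eta:=A_k^{s/2}u-(f-\lambda)\ge0$ is a positive distribution, hence a Radon measure. Taking $v=u\pm tu$ gives complementarity, $\langle\eta,u\rangle=0$, so $\eta=0$ on $\Omega$.

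For the upper bound $\eta\le(\lambda-f)^+$ we test with $v=(u-\varepsilon\varphi)^+$ for $\varphi\ge0$. Here the positivity of $\mathcal J_{k,s}$ gives the sign inequality
\[
\mathcal E_{k,s}\bigl((u-\varepsilon\varphi)^+,\,\cdot\,\bigr)\le\cdots
\]
exactly as in the Gagliardo case. Dividing by $\varepsilon$ and letting $\varepsilon\to0$ yields $\langle\eta,\varphi\rangle\le\int(\lambda-f)^+\varphi$.

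Setting $\mu:=\lambda-\eta$ then gives \eqref{eq:decomp} with $0\le\mu\le\lambda$, and $\mu=\lambda$ a.e. on $\Omega$, which is (ii). Since $\eta\in L^\infty$ and $f\in L^2$, we get $A_k^{s/2}u\in L^2$, so $u\in H^s_k$. The regularity $u\in H^1_k$ for $s\ge1$ follows from $u\in L^2$ and interpolation.

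\textbf{Step 3: mass and (iii).} We need $\int A_k^{s/2}u\,d\nu_k=0$, which requires $u\in L^1$. To get it, compare $u$ with the Riesz potential $A_k^{-s/2}f$, which majorizes $u$ by a comparison or maximum principle for the obstacle problem. This gives a bound by $\int f$ through the kernel representation. Then testing \eqref{eq:decomp} against cutoffs $\chi_R\uparrow1$ gives $\|\mu\|_1=\|f\|_1$. By (ii), $\lambda\nu_k(\Omega)\le\int_\Omega\mu\le\|\mu\|_1$, which is (iii). This $L^1$ step is the one I expect to be the main obstacle. The difficulty is justifying that $\int A_k^{s/2}u=0$ when $s\ge d_k$ is possible (low homogeneous dimension). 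There I would instead use truncation of the tails via the jump form together with the decay of $\mathcal J_{k,s}$.

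\textbf{Step 4: support of $\nabla_k u$ and the $G$-invariant case.} Off $\Omega^\sharp$ we have $u(x)=0$, $u(\sigma_\alpha x)=0$ for every $\alpha$ with $k(\alpha)>0$, and $\nabla u=0$ a.e. on the zero set by Stampacchia's theorem ($u\in H^1_{\mathrm{loc}}$ for $s\ge1$; for $s<1$ we interpret $\nabla_ku$ distributionally). Hence every term of \eqref{eq:intro-Dunkl-op} vanishes there, giving \eqref{eq:gradk-support}. Since $\nu_k$ is $G$-invariant, $\nu_k(\Omega^\sharp)\le(M_k+1)\nu_k(\Omega)$, which is \eqref{eq:omega-sharp-measure}.

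If $f$ is $G$-invariant, then $J$ is invariant under $v\mapsto v\circ g$, because $\mathcal E_{k,s}$ commutes with $G$. By uniqueness, or by averaging over $G$ and using convexity, $u$ can be taken $G$-invariant. Then $\sigma_\alpha(\Omega)=\Omega$, so $\Omega^\sharp=\Omega$, and $\mu$ is $G$-invariant as well.
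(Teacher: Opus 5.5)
Your architecture matches the paper's: minimization over the positive cone, the variational inequality, Lewy--Stampacchia by testing with $(u-\varepsilon\varphi)^+$, $\mu=\lambda-\eta$, complementarity, Stampacchia plus reflections, and $G$-invariance by uniqueness. However, the ingredient that drives both existence and mass conservation is missing, and the substitute you propose in Step 3 cannot work. Majorizing $u$ by the Riesz potential gives no $L^1$ information. By subordination and conservativeness of the Dunkl heat semigroup, for every nonnegative $f\neq0$,
\[
\int_{\mathbb{R}^d}A_k^{-s/2}f\,d\nu_k=\frac{\|f\|_{L^1(\nu_k)}}{\Gamma(s/2)}\int_0^\infty t^{s/2-1}\,dt=\infty ,
\]
whatever the relation between $s$ and $d_k$. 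No comparison principle is needed anyway. On the cone, $J(v)=\frac12\mathcal E_{k,s}(v)-\int fv\,d\nu_k+\lambda\|v\|_{L^1(\nu_k)}$, so $J(u)\le J(0)=0$ already gives $\lambda\|u\|_{L^1(\nu_k)}\le\int fu\,d\nu_k<\infty$. With $u\in L^1$, you get $\|\mu\|_1=\|f\|_1$ by testing with $\chi_\rho$, using $|\mathcal F_ku|\le\|u\|_1$ and Dunkl scaling: $|\mathcal E_{k,s}(u,\chi_\rho)|\le\rho^{-s}\|u\|_1\int|\zeta|^s|\mathcal F_k\chi(\zeta)|\,d\nu_k(\zeta)\to0$. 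This holds for every $0<s<2$, so the difficulty you anticipate for $s\ge d_k$ does not arise.

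Step 1 has the corresponding gap. Neither $\int(f-\lambda)v\le\int fv$ nor truncation bounds $\int fv$ from above. Truncation need not lower $J$, since $J(v\wedge\tau)-J(v)$ contains $\int(f-\lambda)(v-\tau)^+\,d\nu_k$, which is positive where $f>\lambda$. The bound $\lambda\nu_k(\{u_\varepsilon>0\})\le\|f\|_1$ you invoke there is conclusion (iii) itself, available only after the Lewy--Stampacchia and mass arguments. What you need is the $L^1$--energy interpolation inequality obtained by splitting frequencies: $\|v\|_2^2\le b_kr^{d_k}\|v\|_1^2+r^{-s}\mathcal E_{k,s}(v)$, with low frequencies controlled by $|\mathcal F_kv|\le\|v\|_1$, then optimized in $r$. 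With Cauchy--Schwarz and Young this gives $\int fv\,d\nu_k\le\frac\lambda2\|v\|_1+\frac14\mathcal E_{k,s}(v)+C_0$. So $J$ is coercive on the nonnegative cone of $\{v\in L^1(\nu_k):\mathcal E_{k,s}(v)<\infty\}$, and no penalization is needed.

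There is also an ordering error in Step 2. You have $A_k^{s/2}u=f-\mu$ with $\mu=\lambda-\eta$ only known to be bounded, and bounded functions such as constants need not lie in $L^2(\nu_k)$. So $u\in H_k^s$, and hence $u\in H_k^1$ for $s\ge1$ as needed in Step 4, follows only once $\mu\in L^1\cap L^\infty\subset L^2$, that is, after the mass identity. For $s<1$ the localization of $\nabla_ku$ is not part of what is proved, and Stampacchia's theorem is not available there.
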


Theorem~\ref{thm:decomp} shows that essentially the entire obstacle-theoretic part of
the Euclidean proof survives in the Dunkl setting.  The one place where a literal
conversion fails is also the place where the differential--difference nature of the
Dunkl operators becomes unavoidable.  Indeed, in the Euclidean proof one uses the
Stampacchia property $\nabla u=0$ almost everywhere on $\{u=0\}$ to conclude that the
potential term is supported in $\Omega$.  For a Dunkl derivative, however,
\begin{equation}\label{eq:intro-reflection-obstruction}
  T_j u(x)=\partial_j u(x)
   +\sum_{\alpha\in R_+}k(\alpha)\alpha_j
      \frac{u(x)-u(\sigma_\alpha x)}{\langle\alpha,x\rangle}.
\end{equation}
Thus $u(x)=0$ and $\nabla u(x)=0$ do not force $T_j u(x)=0$: one must also exclude
points whose reflections lie in $\Omega$.  This leads exactly to the reflected set
$\Omega^\sharp$ in \eqref{eq:omega-sharp}.  Since $\nu_k$ is reflection invariant,
\[
   \nu_k(\Omega^\sharp)\le(M_k+1)\nu_k(\Omega).
\]
Combining this with \eqref{eq:L2-isometry} and the bounded part of the obstacle
decomposition gives our endpoint consequence:

\begin{theorem}\label{thm:weak}
For every real-valued $f\in L^1(\nu_k)$,
\begin{equation}\label{eq:weak-main}
  \|\mathcal R_k f\|_{L^{1,\infty}(\nu_k;\ell^2)}
  :=\sup_{\lambda>0}
      \lambda\,\nu_k\bigl(\{x:|\mathcal R_k f(x)|_{\ell^2}>\lambda\}\bigr)
  \le (M_k+2)\|f\|_{L^1(\nu_k)}.
\end{equation}
If $f$ is $G$-invariant, then
\begin{equation}\label{eq:weak-inv}
  \|\mathcal R_k f\|_{L^{1,\infty}(\nu_k;\ell^2)}
  \le 2\|f\|_{L^1(\nu_k)}.
\end{equation}
\end{theorem}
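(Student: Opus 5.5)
We follow the Euclidean argument of \cite{OSS2026}, using Theorem~\ref{thm:decomp} with $s=1$. We first reduce to nonnegative $f\in L^1(\nu_k)\cap L^2(\nu_k)$. For real $f$ we write $f=f^+-f^-$; it is cleaner, however, to apply the decomposition to $f^+$ and $f^-$ separately, at height $\lambda$. For $f^\pm\in L^1\cap L^2$, Theorem~\ref{thm:decomp} gives $f^\pm=\mu^\pm+A_k^{1/2}u^\pm$ with $0\le\mu^\pm\le\lambda$, $\|\mu^\pm\|_1=\|f^\pm\|_1$, $u^\pm\in H^1_k$, and $\nabla_k u^\pm=0$ off $\Omega_\pm^\sharp$. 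Set $\mu=\mu^+-\mu^-$ and $u=u^+-u^-$, so that $f=\mu+A_k^{1/2}u$. Since $u\in H^1_k$, we have $\mathcal R_k A_k^{1/2}u=\nabla_k u$ on the Dunkl transform side. Hence
\[
\mathcal R_k f=\mathcal R_k\mu+\nabla_k u,
\]
where $\nabla_k u$ vanishes outside $\Omega^\sharp:=\Omega_+^\sharp\cup\Omega_-^\sharp$. Also $|\mu|\le\mu^++\mu^-$, so $|\mu|\le\lambda$ pointwise (indeed $\mu^+$ and $\mu^-$ may both be positive, but each is at most $\lambda$). We need the sharper bound $\int|\mu|^2\le\lambda\|\mu\|_1\le\lambda\|f\|_1$. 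This follows from $|\mu|^2\le(\mu^++\mu^-)^2$, which is too crude, so we instead estimate the two pieces separately. Concretely, we use the level $\lambda$ split as $|\mathcal R_k f|>\lambda$, which implies $|\mathcal R_k\mu|>\lambda$ or $x\in\Omega^\sharp$.

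For the good part we use Chebyshev together with the isometry \eqref{eq:L2-isometry}:
\[
\nu_k(|\mathcal R_k\mu|>\lambda)\le\lambda^{-2}\|\mu\|_2^2 .
\]
For nonnegative $f$ (a single decomposition), $\|\mu\|_2^2\le\lambda\|\mu\|_1=\lambda\|f\|_1$. This gives the bound $\lambda^{-1}\|f\|_1$. The bad set satisfies $\nu_k(\Omega^\sharp)\le(M_k+1)\nu_k(\Omega)\le(M_k+1)\lambda^{-1}\|f\|_1$ by \eqref{eq:omega-sharp-measure} and (iii). Summing the two gives $(M_k+2)\|f\|_1$. In the $G$-invariant case $\Omega^\sharp=\Omega$, which gives $2$.

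For signed $f$ the main obstacle is keeping the constant $M_k+2$ rather than doubling it. We handle it as follows. The $L^2$ term for $\mu=\mu^+-\mu^-$ obeys $\|\mu\|_2^2\le\int(\mu^+)^2+(\mu^-)^2\le\lambda(\|f^+\|_1+\|f^-\|_1)=\lambda\|f\|_1$. This holds provided $\mu^+\mu^-\ge0$ cross terms are dropped, which they may be since $(\mu^+-\mu^-)^2\le(\mu^+)^2+(\mu^-)^2$ for nonnegative numbers. The bad set is $\Omega_+^\sharp\cup\Omega_-^\sharp$, with measure at most $(M_k+1)\lambda^{-1}(\|f^+\|_1+\|f^-\|_1)=(M_k+1)\lambda^{-1}\|f\|_1$. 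So the signed case costs nothing.

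Finally we remove the assumption $f\in L^2$ by density. We approximate $f\in L^1(\nu_k)$ by $f_n\in L^1\cap L^2$ in $L^1$. The Amri--Sifi weak $(1,1)$ bound \cite{AmriSifi2012} (with any constant) makes $\mathcal R_k f_n\to\mathcal R_k f$ in measure. The distribution-function inequality then passes to the limit, using the standard $\lambda(1-\epsilon)$ argument. The remaining technical point to verify is the identity $\mathcal R_kA_k^{1/2}u=\nabla_k u$ for $u\in H^1_k$. This holds because both sides have Dunkl transform $i\xi\,\mathcal F_k u$, which lies in $L^2$.
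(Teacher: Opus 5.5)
Your proposal is correct and follows the paper's proof essentially verbatim: you decompose $f^+$ and $f^-$ separately at height $\lambda$ with $s=1$, write $\mathcal R_kf=\mathcal R_k\mu+\nabla_ku$, bound $\nu_k(\Omega_+^\sharp\cup\Omega_-^\sharp)$ by $(M_k+1)\lambda^{-1}\|f\|_{L^1(\nu_k)}$ and the good part by Chebyshev plus the $L^2$ isometry, and then extend from $L^1\cap L^2$ by density. The differences are cosmetic. The paper gets $\|\mu\|_2^2\le\lambda\|f\|_{L^1(\nu_k)}$ directly from $|\mu|\le\lambda$ and $\|\mu\|_1\le\|\mu^+\|_1+\|\mu^-\|_1=\|f\|_1$, so your worry that this is ``too crude'' is unfounded. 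Also, the extension to $L^1$ needs no appeal to Amri--Sifi, since the bound already proved on $L^1\cap L^2$ makes $\mathcal R_kf_n$ Cauchy in measure.
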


When $k\equiv 0$, we have $M_k=0$, $\nabla_k=\nabla$, $\nu_k$ coincides with the Lebesgue measure, and $\mathcal{R}_k=\mathcal{R}$. Consequently, Theorem~\ref{thm:weak} recovers \eqref{eq:classical-OSS}, with the constant $2$.  More significantly, the same constant persists for every
root system and every nonnegative multiplicity on the $G$-invariant subspace.  For
general data our proof produces the explicit reflection loss $M_k$.  We do not claim
that this loss is optimal.  Rather, it identifies sharply where the obstacle method
ceases to behave as in Euclidean space.  Removing the reflected enlargement in the
arbitrary-data problem would require an additional cancellation or localization
principle not contained in the direct variational argument.

The preceding endpoint estimate, combined with the Marcinkiewicz interpolation theorem, also yields strong $L^p$ bounds with constants independent of the dimension. Although the strong $L^p$ boundedness of Dunkl--Riesz transforms is already known by singular-integral methods, the interpolation argument below makes explicit the additional information furnished by the endpoint theorem. In particular, in the range $1<p\leq 2$, it recovers a result similar to the dimension-free estimate obtained by Hejna, as stated in \eqref{hejna results1} and \eqref{hejna results2}.

\begin{corollary}
\label{thm:interpolated-vector-bound}
Let $1<p\leq2$. There is a constant $C_p$, depending only on $p$, such that
\begin{equation}
 \left\lVert \mathcal{R}_{k} f\right\rVert_{L^p(\nu_{k};\ell^2)}
 \leq C_p(M_k+2)^{\frac2p-1}
 \left\lVert f\right\rVert_{L^p(\nu_{k})}
 \label{eq:interpolated-general}
\end{equation}
for any real-valued $f\in L^p(\nu_{k})$. If $f$ is $G$-invariant, then
\begin{equation}
 \left\lVert \mathcal{R}_{k} f\right\rVert_{L^p(\nu_{k};\ell^2)}
 \leq C_p2^{\frac2p-1}\left\lVert f\right\rVert_{L^p(\nu_{k})}.
 \label{eq:interpolated-invariant}
\end{equation}
In particular, the constant in \eqref{eq:interpolated-invariant} is
independent of $d$, $R$, and $k$.
\end{corollary}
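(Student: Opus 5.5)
We obtain the corollary by Marcinkiewicz interpolation between the weak-type $(1,1)$ bound of Theorem~\ref{thm:weak} and the $L^2$ isometry \eqref{eq:L2-isometry}. The operator $\mathcal R_k$ is vector-valued, so we interpolate the sublinear map $T f:=|\mathcal R_k f|_{\ell^2}$. Since each $R_{k,j}$ is linear, the triangle inequality in $\ell^2$ gives $T(f+g)\le Tf+Tg$ pointwise.

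At the endpoints we have
\[
\nu_k(\{Tf>\lambda\})\le \frac{A_1}{\lambda}\|f\|_{L^1(\nu_k)}\quad\text{with } A_1=M_k+2,
\qquad
\|Tf\|_{L^2(\nu_k)}\le \|f\|_{L^2(\nu_k)}\quad\text{with } A_2=1.
\]
For $G$-invariant $f$ we take $A_1=2$ instead. A subtlety arises in the invariant case: the decomposition $f=f_1+f_2$ used in the interpolation proof must preserve $G$-invariance. We use the standard height truncation $f_1=f\mathbf 1_{\{|f|>c\lambda\}}$, $f_2=f-f_1$. Since $\nu_k$ is $G$-invariant and $\{|f|>c\lambda\}$ is a $G$-invariant set when $f$ is, both pieces are again $G$-invariant. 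The interpolation can therefore be run entirely inside the $G$-invariant subspace, where the bound \eqref{eq:weak-inv} holds. Real-valuedness is preserved in the same way.

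Next we need the quantitative form of Marcinkiewicz with the geometric-mean constant. The textbook argument gives a constant of the form $C(p)A_1^{1-\theta}A_2^{\theta}$, where $\frac1p=\frac{1-\theta}{1}+\frac{\theta}{2}$, so $1-\theta=\frac2p-1$. To obtain this constant, and not just some constant depending on $A_1$, $A_2$, we do not split at height $\lambda$ itself. We split instead at $f_1=f\mathbf 1_{\{|f|>\delta\lambda\}}$ with a free parameter $\delta$. Then
\[
\nu_k(\{Tf>2\lambda\})\le \frac{A_1}{\lambda}\int_{|f|>\delta\lambda}|f|\,d\nu_k+\frac{A_2^2}{\lambda^2}\int_{|f|\le\delta\lambda}|f|^2\,d\nu_k .
\]
Integrating this against $p(2\lambda)^{p-1}\,d(2\lambda)$ and using Fubini yields
\[
\|Tf\|_p^p\le C\Bigl(\frac{A_1\delta^{1-p}}{p-1}+\frac{A_2^2\delta^{2-p}}{2-p}\Bigr)\|f\|_p^p .
\]
Choosing $\delta=A_1/A_2^2$ balances the two terms and gives $\|Tf\|_p^p\lesssim_p A_1^{2-p}A_2^{2p-2}\|f\|_p^p$. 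Taking $p$-th roots yields $A_1^{\frac2p-1}A_2^{2-\frac2p}$, which equals $(M_k+2)^{\frac2p-1}$ in the general case and $2^{\frac2p-1}$ in the invariant case.

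At $p=2$ the factor $\frac1{2-p}$ blows up, but that case is simply the isometry, with $C_2=1$. For $p<2$ we set $C_p$ to be the resulting constant, which is finite and depends only on $p$.

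Finally, Theorem~\ref{thm:weak} is stated for $f\in L^1(\nu_k)$ and the isometry for $f\in L^2(\nu_k)$. The interpolation argument is therefore carried out first for $f\in L^1\cap L^2$, where $\mathcal R_k f$ is defined by the Dunkl multiplier and both endpoint bounds apply to $f_1$ and $f_2$. Here $f_1\in L^1$ because $|f|^p\ge(\delta\lambda)^{p-1}|f|$ on its support, and $f_2\in L^2$ because $|f|^2\le(\delta\lambda)^{2-p}|f|^p$ there. We then extend $\mathcal R_k$ to all of $L^p(\nu_k)$ by density. This extension, together with the constant bookkeeping above, is the only point requiring care.
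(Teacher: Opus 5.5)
Your proposal is correct and takes the same route as the paper, which obtains the corollary by Marcinkiewicz interpolation between the weak-type $(1,1)$ bound of Theorem~\ref{thm:weak} and the $L^2$ isometry \eqref{eq:L2-isometry}. The paper only invokes Marcinkiewicz without details; your $\delta$-optimized computation giving $A_1^{\frac2p-1}A_2^{2-\frac2p}$, your check that the height truncation preserves $G$-invariance, and the $L^1\cap L^2$ density step supply exactly the details it leaves implicit.
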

\subsection{Novelty and organization of the paper}

The main contributions of the paper can be summarized as follows.
\begin{enumerate}[label=\textup{(\roman*)}]
  \item Using the properties of the Dunkl heat kernel and the subordination formula,
we derive a positive jump-kernel representation for the quadratic form
associated with $A_k^{s/2}$.

  \item We formulate and prove a whole-space fractional Dunkl obstacle/partial-balayage
  decomposition with $L^1$ mass preservation, an $L^\infty$ cap, complementarity on
  the positivity set, and quantitative control of that set.

  \item We derive a Lewy--Stampacchia estimate for the fractional Dunkl energy using
  the Markov property of the positive jump form.  This is the step that permits the
  obstacle argument to pass from the Euclidean fractional Laplacian to the Dunkl
  fractional Laplacian without relying on an explicit translation formula.

  \item We isolate a reflection-localization lemma for $H_k^1$ functions.  It shows
  that the natural Dunkl gradient of the obstacle potential vanishes outside the
  reflected enlargement $\Omega^\sharp$, and it explains why the Euclidean support
  argument cannot simply be copied for arbitrary data.

  \item We obtain the vector weak-type estimate \eqref{eq:weak-main} by combining the
  obstacle decomposition with the exact $L^2$ isometry of the vector Dunkl--Riesz
  transform.  On $G$-invariant data the reflection obstruction disappears and the
  constant $2$ is recovered exactly.

  \item As an application, we establish a dimension-free weak-type $(1,1)$ estimate
for the Dunkl--Schr\"odinger Riesz transform, which, to the best of our
knowledge, is also new in the classical setting.
\end{enumerate}

The paper is organized as follows. Section~\ref{sec:Prel} recalls the basic
facts concerning the Dunkl transform, weighted Sobolev spaces, and the
Stampacchia property that will be used throughout the paper.
Section~\ref{sec: Energy, min and var} develops the fractional Dunkl energy
and establishes the positivity, truncation, and interpolation properties of
the associated quadratic form. In the same section, we minimize
\eqref{eq:intro-energy} over the positive cone and derive the corresponding
variational inequality and energy identity.
Section~\ref{sec:Lewy--Stampacchia} is devoted to the Dunkl
Lewy--Stampacchia estimate, mass conservation, regularity, and the
complementarity relation. Section~\ref{sec: man thm proof} establishes the
reflection-localization principle and proves Theorem~\ref{thm:weak}.
Finally, in Section~\ref{sec: application}, we apply these results to obtain
a dimension-free weak-type $(1,1)$ estimate for the
Dunkl--Schr\"odinger Riesz transform.

\section{Dunkl transform and weighted Sobolev preliminaries}\label{sec:Prel}

We collect the structural facts needed later.  Standard references for the Dunkl
transform and Dunkl operators include Dunkl \cite{Dunkl1989}, de Jeu \cite{deJeu1993}, and R\"osler \cite{Rosler2003}.

\subsection{The Dunkl transform and fractional powers}\label{subsec: Dunkl transform}

Let $E_k(\cdot,\cdot)$ denote the Dunkl kernel. With the normalization fixed earlier, the Dunkl transform is defined by
\[
  \mathcal{F}_{k}f(\xi)=\int_{\mathbb{R}^d} f(x)E_k(-i\xi,x)\,d\nu_k(x),
\]
for $f\in L^1(\nu_k)$.  Since $\left\lvert E_k(-i\xi,x)\right\rvert\le1$ for real
$x,\xi$, we have
\begin{equation}\label{eq:F-L1}
  \left\lvert \mathcal{F}_{k}f(\xi)\right\rvert\le\left\lVert f\right\rVert_{L^1(\nu_k)}.
\end{equation}
The transform extends to a unitary operator on $L^2(\nu_k)$, satisfying
\[
\|\mathcal{F}_k f\|_{L^2(\nu_k)}=\|f\|_{L^2(\nu_k)},
\]
and maps the Schwartz space $\mathcal{S}(\mathbb{R}^d)$ onto itself.

For $s \in \mathbb{R}$, the inhomogeneous Dunkl Sobolev space $H_k^s$ is defined by
\[
H_k^s
=
\left\{
v\in L^2(\nu_k):
\int_{\mathbb{R}^d}
(1+|\xi|^2)^s
|\mathcal{F}_k v(\xi)|^2\,d\nu_k(\xi)
<\infty
\right\}.
\]
For $v\in L^1(\nu_k)$ define the (possibly infinite) quadratic energy by
\begin{equation}\label{eq:Ealpha-def}
  \mathcal{E}_{k,s}(v)
   :=\int_{\mathbb{R}^d}\left\lvert \xi\right\rvert^{s}\left\lvert \mathcal{F}_{k}v(\xi)\right\rvert^2\,d\nu_k(\xi).
\end{equation}
If $v,w$ have finite energy, the associated bilinear form is obtained by polarization, that is,
\[
  \mathcal{E}_{k,s}(v,w)
   :=\int_{\mathbb{R}^d}\left\lvert \xi\right\rvert^{s}
          \mathcal{F}_{k}v(\xi)\overline{\mathcal{F}_{k}w(\xi)}\,d\nu_k(\xi).
\]
We use the energy space
\begin{equation}\label{eq:Xalpha}
  X_{k,s}:=
  \{v\in L^1(\nu_k):\mathcal{E}_{k,s}(v)<\infty\}.
\end{equation}
The following proposition provides a positive jump-kernel representation of
the quadratic form generated by $A_k^{s/2}$ for every $0<s<2$.

\begin{proposition}
\label{prop:gagliardo-dunkl}
Let $0<s<2$. There exists a nonnegative symmetric
kernel $J_{k,s}\colon \mathbb{R}^d\times\mathbb{R}^d
  \rightarrow [0,\infty]$ such that, for all $v,w\in\mathcal{S}(\mathbb{R}^d)$,
\begin{equation}\label{eq:gagliardo-dunkl}
  \mathcal{E}_{k,s}(v,w)
  =
  \frac{C_{k,d,s}}{2}
  \iint_{\mathbb{R}^d\times\mathbb{R}^d}
  (v(x)-v(y))
  \overline{(w(x)-w(y))}
  J_{k,s}(x,y)\,d\nu_k(x)d\nu_k(y).
\end{equation}
The identity extends by closure to all $v,w$ in the form domain of
$\mathcal{E}_{k,s}$. In particular,
\[
  \mathcal{E}_{k,s}(v)
  =
  \frac{C_{k,d,s}}{2}
  \iint_{\mathbb{R}^d\times\mathbb{R}^d}
  |v(x)-v(y)|^2
  J_{k,s}(x,y)\,d\nu_k(x)d\nu_k(y).
\]
For real-valued functions, the complex conjugation in
\eqref{eq:gagliardo-dunkl} may be omitted.
\end{proposition}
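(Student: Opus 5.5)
The plan is to derive the representation from the Dunkl heat semigroup and Bochner subordination, following the abstract outline given in the introduction. Let $h_t(x,y)$ denote the Dunkl heat kernel, that is, the kernel of $e^{t\Delta_k}$ with respect to $\nu_k$. I will use three standard facts about it.
\begin{enumerate}[label=\textup{(\alph*)}]
  \item It is positive and symmetric: $h_t(x,y)=h_t(y,x)>0$ (R\"osler).
  \item It is conservative: $\int h_t(x,y)\,d\nu_k(y)=1$.
  \item On the Dunkl transform side it is the multiplier $e^{-t|\xi|^2}$.
\end{enumerate}
For $0<s<2$ I start from the scalar identity
\[
\lambda^{s/2}=\frac{s/2}{\Gamma(1-s/2)}\int_0^\infty(1-e^{-t\lambda})\,t^{-1-s/2}\,dt,\qquad \lambda\ge0,
\]
and apply it with $\lambda=|\xi|^2$. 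By Plancherel, for $v,w\in\mathcal S$,
\[
\mathcal E_{k,s}(v,w)=c_s\int_0^\infty\big\langle (I-e^{t\Delta_k})v,w\big\rangle_{L^2(\nu_k)}\,t^{-1-s/2}\,dt,
\qquad c_s=\frac{s/2}{\Gamma(1-s/2)}.
\]
Fubini in $(t,\xi)$ is justified because the integrand $(1-e^{-t|\xi|^2})|\xi|^{0}\,|\mathcal F_kv\,\overline{\mathcal F_kw}|\,t^{-1-s/2}$ is integrable; this uses $\min(1,t|\xi|^2)$ and the Schwartz decay of $\mathcal F_kv,\mathcal F_kw$.

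Next I rewrite each inner product as a double integral. By conservativity and symmetry of $h_t$,
\[
\langle (I-e^{t\Delta_k})v,w\rangle=\frac12\iint (v(x)-v(y))\overline{(w(x)-w(y))}\,h_t(x,y)\,d\nu_k(x)\,d\nu_k(y).
\]
To see this, expand the product. The diagonal terms give $\int v\bar w$ twice, by conservativity. The cross terms give $\iint v(x)\overline{w(y)}h_t$ twice, by symmetry. Absolute integrability holds for each fixed $t$ since $v,w\in L^2$ and $e^{t\Delta_k}$ is an $L^2$ contraction.

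I then integrate in $t$ and want to exchange the order of integration, setting
\[
J_{k,s}(x,y):=\int_0^\infty h_t(x,y)\,t^{-1-s/2}\,dt\in[0,\infty],
\qquad C_{k,d,s}:=c_s.
\]
Symmetry and nonnegativity of $J_{k,s}$ are then immediate. The exchange is the main technical point, because the kernel is singular near the diagonal and, through the reflections, near the orbit $Gx$. I first handle the quadratic case $v=w$. There the integrand $|v(x)-v(y)|^2h_t\,t^{-1-s/2}$ is nonnegative, so Tonelli applies with no integrability hypothesis, and the $t$-integral equals $\mathcal E_{k,s}(v)<\infty$. This shows
\[
\iint |v(x)-v(y)|^2 J_{k,s}(x,y)\,d\nu_k(x)\,d\nu_k(y)<\infty.
\]
For $v\ne w$, Cauchy--Schwarz with respect to the measure $J_{k,s}\,d\nu_k\otimes d\nu_k$ gives absolute convergence of the mixed integrand. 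Fubini then applies, and the bilinear identity follows. Alternatively, the bilinear identity follows from the quadratic one by polarization, since both sides are sesquilinear. Conjugation is irrelevant for real functions.

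Finally, I extend the identity by closure. Take $v$ in the form domain and Schwartz functions $v_n\to v$ in the form norm $\|v\|_{L^2}^2+\mathcal E_{k,s}(v)$; such approximations exist because $\mathcal S$ is dense in the weighted Fourier space. Then $\mathcal E_{k,s}(v_n-v_m)\to0$, so $(v_n(x)-v_n(y))$ is Cauchy in $L^2(J_{k,s}\,d\nu_k\otimes d\nu_k)$. Passing to a subsequence with $v_n\to v$ a.e., the limit is identified with $v(x)-v(y)$ by Fatou. This gives the quadratic identity for $v$, and the bilinear identity follows by polarization.

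I expect the only genuinely delicate input to be the positivity of $h_t$ together with conservativity. Both are standard results in Dunkl theory and can be cited from R\"osler's work on Dunkl heat kernels. Once they are granted, the remaining steps are Tonelli and Fubini arguments.
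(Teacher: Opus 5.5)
Your proposal is correct and follows essentially the same route as the paper: you write $\langle (I-H_t^k)v,w\rangle$ as a double integral against the heat kernel using symmetry and conservativity, apply the subordination identity with $c_s=\frac{s}{2\Gamma(1-s/2)}$, set $J_{k,s}(x,y)=\int_0^\infty h_t^k(x,y)\,t^{-1-s/2}\,dt$, use Tonelli in the quadratic case followed by polarization, and extend by closure. Your version is somewhat more careful than the paper's in justifying the Fubini exchanges and in identifying the limit in the closure step.
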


\begin{proof}
We follow the idea in \cite[Section 6]{AnoopParui2019}. Let $(H_t^k)_{t>0}$ denote the Dunkl heat semigroup, characterized by
\[
  \mathcal{F}_k(H_t^k f)(\xi)
  =e^{-t|\xi|^2}\mathcal{F}_k f(\xi).
\]
The by \cite[Definition~4.4 and Lemma~4.5]{Rosler1998}, it is a symmetric Markov semigroup on $L^2(\nu_k)$ and admits a
measurable heat kernel $h_t^k(x,y)$ satisfying
\[
  H_t^k f(x)
  =\int_{\mathbb{R}^d}h_t^k(x,y)f(y)\,d\nu_k(y),
\]
together with
\begin{align}\label{prop heat ker}
  h_t^k(x,y)=h_t^k(y,x)\geq0,
  \int_{\mathbb{R}^d}h_t^k(x,y)\,d\nu_k(y)=1.
\end{align}
Define
\begin{equation}\label{eq:Jka-def}
  J_{k,s}(x,y)
  :=
  \int_0^\infty h_t^k(x,y)\,
  \frac{dt}{t^{1+s/2}}.
\end{equation}
The kernel $J_{k,s}$ is therefore nonnegative and symmetric.

For $0<s<2$, the scalar identity
\begin{equation}\label{eq:balakrishnan-scalar}
  \lambda^{s/2}
  =
  c_s
  \int_0^\infty
  \bigl(1-e^{-t\lambda}\bigr)
  \frac{dt}{t^{1+s/2}},
  \qquad \lambda\geq0,
\end{equation}
holds with
\[
  c_s
  :=
  \frac{s}{2\Gamma(1-s/2)}
  =
  -\frac{1}{\Gamma(-s/2)}.
\]
Consequently, Dunkl--Plancherel gives, for
$v,w\in\mathcal{S}(\mathbb{R}^d)$,
\begin{align}
  \mathcal{E}_{k,s}(v,w)
  &=
  c_s
  \int_0^\infty
  \left\langle (I-H_t^k)v,w\right\rangle_{L^2(\nu_k)}
  \frac{dt}{t^{1+s/2}}.
  \label{eq:energy-semigroup}
\end{align}
The symmetry and conservativeness of the heat kernel imply
\begin{align}
  \left\langle (I-H_t^k)v,w\right\rangle_{L^2(\nu_k)}
  &=
  \frac12
  \iint_{\mathbb{R}^d\times\mathbb{R}^d}
  (v(x)-v(y))
  \overline{(w(x)-w(y))}
  \nonumber\\
  &\hspace{3.5cm}\times
  h_t^k(x,y)\,d\nu_k(x)d\nu_k(y).
  \label{eq:heat-difference-identity}
\end{align}
Indeed, this follows by expanding the product on the right-hand side
and using \eqref{prop heat ker}.

Substituting \eqref{eq:heat-difference-identity} into
\eqref{eq:energy-semigroup} and applying Tonelli's theorem first to
the associated quadratic form, followed by polarization, we obtain
\begin{align*}
  \mathcal{E}_{k,s}(v,w)
  &=
  \frac{c_s}{2}
  \iint_{\mathbb{R}^d\times\mathbb{R}^d}
  (v(x)-v(y))
  \overline{(w(x)-w(y))}
  J_{k,s}(x,y)\,d\nu_k(x)d\nu_k(y).
\end{align*}
Thus \eqref{eq:gagliardo-dunkl} holds. Any normalization constants arising
from $d\nu_k$ or the Dunkl heat kernel may be absorbed into
$C_{k,d,s}$.

Finally, taking $w=v$ shows that
\[
  \mathcal{E}_{k,s}(v)
  =
  \frac{C_{k,d,s}}{2}
  \iint_{\mathbb{R}^d\times\mathbb{R}^d}
  |v(x)-v(y)|^2
  J_{k,s}(x,y)\,d\nu_k(x)d\nu_k(y).
\]
The right-hand side is a nonnegative quadratic form and is therefore
closable. Passing to limits in the corresponding form norm extends
the identity to the closed form domain. The bilinear identity then
follows by polarization.
\end{proof}
 The explicit form of $J_{k,s}(x,y)$ will not be needed; only its
positivity, symmetry, and representation of the spectral form will be used.

We next establish two lemmas concerning properties of the bilinear form
$\mathcal{E}_{k,s}$. The first lemma states that two nonnegative
functions with disjoint supports have a nonpositive interaction with respect
to the fractional Dunkl energy. The second lemma expresses the Markov
property of the energy: applying any $1$-Lipschitz contraction that fixes
the origin cannot increase the energy. In particular, taking the positive
and negative parts preserves the energy space and yields the stated sign
inequalities.

\begin{lemma}\label{lem:disjoint}
Let $v,w\in X_{k,s}$ be nonnegative and satisfy $vw=0$ almost everywhere.
Then
\[
  \mathcal{E}_{k,s}(v,w)\le0.
\]
\end{lemma}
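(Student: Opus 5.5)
We plan to obtain the sign from the jump-kernel representation of Proposition~\ref{prop:gagliardo-dunkl}. For real-valued $v,w$ in the form domain we have
\[
  \mathcal{E}_{k,s}(v,w)=\frac{C_{k,d,s}}{2}\iint (v(x)-v(y))(w(x)-w(y))J_{k,s}(x,y)\,d\nu_k(x)\,d\nu_k(y).
\]
Expanding the product gives
\[
  (v(x)-v(y))(w(x)-w(y)) = v(x)w(x)+v(y)w(y)-v(x)w(y)-v(y)w(x).
\]
Since $vw=0$ almost everywhere, the first two terms vanish for $\nu_k\otimes\nu_k$-a.e.\ $(x,y)$. The integrand is therefore $-(v(x)w(y)+v(y)w(x))J_{k,s}(x,y)$. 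This is pointwise nonpositive, because $v,w\ge0$ and $J_{k,s}\ge0$. As $C_{k,d,s}>0$ (it comes from $c_s=s/(2\Gamma(1-s/2))>0$), the conclusion $\mathcal{E}_{k,s}(v,w)\le 0$ follows, provided the representation is valid for these $v,w$.

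The main obstacle is therefore justifying the identity, and the integrability needed to split the integral, for $v,w\in X_{k,s}$ rather than for Schwartz functions. Two points need care. First, we must check that $X_{k,s}$ sits inside the closed form domain on which Proposition~\ref{prop:gagliardo-dunkl} is asserted. Second, splitting the integrand requires each piece to be integrable, and $\iint v(x)w(y)J_{k,s}\,d\nu_k\,d\nu_k$ could a priori be infinite.

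Rather than splitting, we would argue directly with the unsplit integral. The representation gives absolute convergence: by Cauchy--Schwarz, the integrand $|(v(x)-v(y))(w(x)-w(y))|J_{k,s}$ is integrable, since $\mathcal{E}_{k,s}(v),\mathcal{E}_{k,s}(w)<\infty$. An absolutely convergent integral of an a.e.\ nonpositive function is $\le0$. So we only need the identity itself, not the splitting.

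To secure the identity for $v,w\in X_{k,s}$, we would approximate $v$ by Schwartz functions $v_n$, for instance $v_n=H^k_{1/n}v$ truncated in frequency. These converge in the energy seminorm, by dominated convergence on the Dunkl side, and also in $L^1(\nu_k)$, hence along a subsequence almost everywhere. On the kernel side, the Gagliardo-type expressions for $v_n-v_m$ form a Cauchy sequence in $L^2(J_{k,s}\,d\nu_k\,d\nu_k)$. The a.e.\ limit of $v_n(x)-v_n(y)$ is $v(x)-v(y)$, so by Fatou's lemma the differences converge to $v(x)-v(y)$ in that weighted $L^2$ space. Polarization then transfers the bilinear identity to the limit.

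If this approximation proves delicate, we have a fallback. Using \eqref{eq:energy-semigroup}, we would write
\[
  \mathcal{E}_{k,s}(v,w)=c_s\int_0^\infty\big(\langle v,w\rangle-\langle H^k_t v,w\rangle\big)\,\frac{dt}{t^{1+s/2}}=-c_s\int_0^\infty\langle H^k_t v,w\rangle\,\frac{dt}{t^{1+s/2}},
\]
since $\langle v,w\rangle=0$. Here $\langle H_t^kv,w\rangle\ge0$ by positivity of the heat kernel \eqref{prop heat ker}. This formula needs only $v,w\in L^2(\nu_k)$ with finite energy, and the $t$-integral converges by the spectral representation. It yields the inequality immediately.
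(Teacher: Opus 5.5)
Your proof is correct and follows the paper's route: use $vw=0$ to get $(v(x)-v(y))(w(x)-w(y))=-v(x)w(y)-v(y)w(x)\le 0$ almost everywhere, then insert this into the jump-kernel representation of Proposition~\ref{prop:gagliardo-dunkl}. Your extra justification that the representation holds on $X_{k,s}$ goes beyond the paper, which relies only on the closure remark in that proposition. One small fix: frequency-truncating $H^k_{1/n}v$ need not give a Schwartz function, since $\mathcal{F}_k v$ is merely continuous. Approximate $\mathcal{F}_k v$ directly by Schwartz functions in $L^2((1+|\xi|^s)\,d\nu_k)$ instead, or rely on your semigroup fallback, which is fully rigorous on the form domain by Fubini--Tonelli on the Dunkl side.
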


\begin{proof}
For almost every $x,y$,
\[
 (v(x)-v(y))(w(x)-w(y))
 =-v(x)w(y)-v(y)w(x)\le0.
\]
The conclusion then follows immediately from the representation
\eqref{eq:gagliardo-dunkl}.
\end{proof}

\begin{lemma}\label{lem:markov}
If $\Phi:\mathbb{R}\to\mathbb{R}$ is $1$-Lipschitz, $\Phi(0)=0$, and $v$ is real-valued with
$v\in X_{k,s}$, then $\Phi\circ v\in X_{k,s}$ and
\[
  \mathcal{E}_{k,s}(\Phi\circ v)\le\mathcal{E}_{k,s}(v).
\]
In particular $v^+,v^-\in X_{k,s}$ and
\[
  \mathcal{E}_{k,s}(v,v^- )\le-\mathcal{E}_{k,s}(v^-)\le0,
  \qquad
  \mathcal{E}_{k,s}(v,v^+ )\ge \mathcal{E}_{k,s}(v^+)\ge0.
\]
\end{lemma}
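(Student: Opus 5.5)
The plan is to work directly from the jump-kernel representation of Proposition~\ref{prop:gagliardo-dunkl}. For real-valued $v$ with $\mathcal{E}_{k,s}(v)<\infty$,
\[
  \mathcal{E}_{k,s}(v)=\frac{C_{k,d,s}}{2}\iint_{\mathbb{R}^d\times\mathbb{R}^d}|v(x)-v(y)|^2J_{k,s}(x,y)\,d\nu_k(x)d\nu_k(y),
\]
with $J_{k,s}\ge0$. Since $\Phi$ is $1$-Lipschitz, we have pointwise
\[
  |\Phi(v(x))-\Phi(v(y))|^2\le|v(x)-v(y)|^2 .
\]
We also have $|\Phi\circ v|\le|v|$ because $\Phi(0)=0$, so $\Phi\circ v\in L^1(\nu_k)\cap L^2(\nu_k)$ whenever $v$ is. The inequality $\mathcal{E}_{k,s}(\Phi\circ v)\le\mathcal{E}_{k,s}(v)$ will then follow by integrating against the nonnegative kernel, \emph{provided} the representation is valid for $\Phi\circ v$ as well as for $v$.

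That proviso is the main obstacle. The representation was proved for Schwartz functions and extended by closure to the form domain. We need the double integral to compute the spectral energy $\int|\xi|^s|\mathcal F_k w|^2\,d\nu_k$ for every $w\in L^2(\nu_k)$, and in particular to show that finiteness of the double integral forces finiteness of the spectral energy.

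I would avoid closure arguments and use the semigroup formula directly. For $w\in L^2(\nu_k)$, Plancherel and Tonelli applied to \eqref{eq:balakrishnan-scalar} give
\[
  \int|\xi|^s|\mathcal{F}_k w|^2\,d\nu_k=c_s\int_0^\infty\langle (I-H_t^k)w,w\rangle\,\frac{dt}{t^{1+s/2}} ,
\]
as an identity in $[0,\infty]$. The integrand $(1-e^{-t|\xi|^2})|\mathcal F_kw|^2$ is nonnegative, so Tonelli applies with no finiteness assumption. Next, \eqref{eq:heat-difference-identity} holds for every $w\in L^2(\nu_k)$. This follows by expanding, using symmetry, conservativeness and $h_t^k\ge0$; every term is absolutely integrable because $H_t^k$ is a contraction and $\iint h_t^k|w(x)|^2\,d\nu_k(x)d\nu_k(y)=\|w\|_2^2$. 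A second application of Tonelli then gives the kernel formula for every $w\in L^2(\nu_k)$, with both sides possibly infinite.

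Applying this to $w=\Phi\circ v$ and to $w=v$ and comparing the integrands pointwise yields $\mathcal{E}_{k,s}(\Phi\circ v)\le\mathcal{E}_{k,s}(v)<\infty$. Hence $\Phi\circ v\in X_{k,s}$. If $X_{k,s}$ elements are not assumed to lie in $L^2(\nu_k)$, I would first note that $\mathcal{F}_kv$ is bounded by \eqref{eq:F-L1}, and handle this by working within $L^1\cap L^2$, which is the setting used later.

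For the consequences, take $\Phi(t)=t^+$ and $\Phi(t)=t^-=\max(-t,0)$. Both are $1$-Lipschitz and vanish at $0$, so $v^\pm\in X_{k,s}$. Write $v=v^+-v^-$. Bilinearity gives
\[
  \mathcal{E}_{k,s}(v,v^-)=\mathcal{E}_{k,s}(v^+,v^-)-\mathcal{E}_{k,s}(v^-) .
\]
The functions $v^+$ and $v^-$ are nonnegative with $v^+v^-=0$, so Lemma~\ref{lem:disjoint} gives $\mathcal{E}_{k,s}(v^+,v^-)\le0$. Therefore $\mathcal{E}_{k,s}(v,v^-)\le-\mathcal{E}_{k,s}(v^-)\le0$.

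Similarly,
\[
  \mathcal{E}_{k,s}(v,v^+)=\mathcal{E}_{k,s}(v^+)-\mathcal{E}_{k,s}(v^-,v^+)\ge\mathcal{E}_{k,s}(v^+)\ge0 .
\]
Here the bilinear form is real and symmetric for real functions, as noted after Proposition~\ref{prop:gagliardo-dunkl}.
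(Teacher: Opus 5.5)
Your proof is correct and follows the same route as the paper. You compare integrands pointwise in the jump-kernel representation of Proposition~\ref{prop:gagliardo-dunkl} using $|\Phi(a)-\Phi(b)|\le|a-b|$, and then obtain the sign inequalities from $v=v^+-v^-$ and Lemma~\ref{lem:disjoint}. Your extra Tonelli argument gives the kernel identity in $[0,\infty]$ for every $w\in L^2(\nu_k)$, with $v\in L^2(\nu_k)$ supplied by the remark after Lemma~\ref{lem:GN}; this is a genuine improvement, because it shows that $\Phi\circ v$ lies in the form domain, which the paper's ``extends by closure'' wording leaves unjustified.
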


\begin{proof}
The first assertion follows directly from
$\left\lvert \Phi(v(x))-\Phi(v(y))\right\rvert\le\left\lvert v(x)-v(y)\right\rvert$ in
\eqref{eq:gagliardo-dunkl}.  The final inequalities follow from
$v=v^+-v^-$ and Lemma~\ref{lem:disjoint}.
\end{proof}

\subsection{An \texorpdfstring{$L^1$}{L1}--energy interpolation estimate}

Let $b_k:=\nu_k(B(0,1))$. By homogeneity, $\nu_k(B(0,r))=b_kr^{d_k}$.
We next establish a lemma which is Dunkl analogue of the
Gagliardo--Nirenberg interpolation inequality.
\begin{lemma}\label{lem:GN}
For $0<s<2$ and $v\in X_{k,s}$,
\begin{equation}\label{eq:GN}
  \left\lVert v\right\rVert_{L^2(\nu_k)}^2
  \le C_{k,s}
   \left\lVert v\right\rVert_{L^1(\nu_k)}^{\frac{2s}{{d_k}+s}}
   \mathcal{E}_{k,s}(v)^{\frac{{d_k}}{{d_k}+s}},
\end{equation}
where one may take
\[
 C_{k,s}
 =\frac{{d_k}+s}{s}
   b_k^{\frac{s}{{d_k}+s}}
   \left(\frac{s}{{d_k}}\right)^{\frac{{d_k}}{{d_k}+s}}.
\]
\end{lemma}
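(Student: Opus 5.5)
The plan is the standard frequency-splitting argument on the Dunkl transform side, using Plancherel together with \eqref{eq:F-L1}. Fix $v\in X_{k,s}$ and a radius $r>0$, to be optimized later. Since $v\in L^1(\nu_k)$ and $\mathcal{E}_{k,s}(v)<\infty$, the function $\mathcal{F}_k v$ is bounded, so it lies in $L^2$ near the origin. Away from the origin, $|\xi|^{s}|\mathcal{F}_k v|^2$ is integrable, so $v\in L^2(\nu_k)$. Dunkl--Plancherel then gives
\[
\|v\|_{L^2(\nu_k)}^2=\int_{|\xi|\le r}|\mathcal{F}_k v(\xi)|^2\,d\nu_k(\xi)+\int_{|\xi|>r}|\mathcal{F}_k v(\xi)|^2\,d\nu_k(\xi).
\]
(Strictly, one should first justify that $v\in L^2$ and that Plancherel applies to an $L^1$ function whose transform is in $L^2$. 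This follows from the standard fact that the $L^1$ and $L^2$ definitions of $\mathcal{F}_k$ agree on $L^1\cap L^2$, after approximating $v$ or testing against Schwartz functions. I expect this to be the only mildly delicate point; the rest is arithmetic.)

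The two pieces are estimated separately. For the low frequencies, \eqref{eq:F-L1} gives $|\mathcal{F}_k v|\le\|v\|_{L^1(\nu_k)}$, and the homogeneity $\nu_k(B(0,r))=b_k r^{d_k}$ gives a bound of $b_k r^{d_k}\|v\|_{L^1(\nu_k)}^2$. For the high frequencies, since $|\xi|^{s}\ge r^{s}$ there, the contribution is at most $r^{-s}\mathcal{E}_{k,s}(v)$. Writing $A=\|v\|_{L^1(\nu_k)}^2$ and $E=\mathcal{E}_{k,s}(v)$, this yields
\[
\|v\|_{L^2(\nu_k)}^2\le b_k r^{d_k}A+r^{-s}E\qquad\text{for every }r>0.
\]

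Finally, minimize the right-hand side over $r$. The derivative vanishes when $d_k b_k r^{d_k+s}A=sE$, that is, at $r_*=\bigl(sE/(d_k b_k A)\bigr)^{1/(d_k+s)}$; the degenerate cases $A=0$ or $E=0$ force $v=0$ and can be discarded. At $r_*$ we have $b_k r_*^{d_k}A=\tfrac{s}{d_k}r_*^{-s}E$, so the minimum value is $\tfrac{d_k+s}{d_k}\,r_*^{-s}E$. Substituting $r_*$ gives
\[
\frac{d_k+s}{d_k}\Bigl(\frac{d_k b_k}{s}\Bigr)^{\frac{s}{d_k+s}}A^{\frac{s}{d_k+s}}E^{\frac{d_k}{d_k+s}}.
\]
Since $A^{s/(d_k+s)}=\|v\|_{L^1(\nu_k)}^{2s/(d_k+s)}$, this is the claimed bound \eqref{eq:GN}. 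It remains to check that the prefactor equals $\frac{d_k+s}{s}b_k^{s/(d_k+s)}(s/d_k)^{d_k/(d_k+s)}$. The $(d_k+s)$ and $b_k$ factors match directly, so the check reduces to the identity
\[
\frac{1}{d_k}\Bigl(\frac{d_k}{s}\Bigr)^{\frac{s}{d_k+s}}=\frac1s\Bigl(\frac{s}{d_k}\Bigr)^{\frac{d_k}{d_k+s}},
\]
which holds because both sides equal $d_k^{-d_k/(d_k+s)}s^{-s/(d_k+s)}$.
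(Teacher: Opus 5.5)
Your proof is correct and is essentially the paper's argument: split the Plancherel integral at $|\xi|=r$, bound the low frequencies by $b_k r^{d_k}\|v\|_{L^1(\nu_k)}^2$ via \eqref{eq:F-L1} and the high frequencies by $r^{-s}\mathcal{E}_{k,s}(v)$, then optimize in $r$. Your explicit computation of $r_*$ and of the constant checks out. Your remark that one must first know $v\in L^2(\nu_k)$ (from $\mathcal{F}_k v\in L^2$ plus a duality argument with Schwartz functions) fills in a point the paper leaves implicit.
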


\begin{proof}
For $r>0$, Plancherel, \eqref{eq:F-L1}, and \eqref{eq:Ealpha-def} give
\begin{align*}
 \left\lVert v\right\rVert_2^2
 &=\int_{\left\lvert \xi\right\rvert<r}\left\lvert \mathcal{F}_{k}v(\xi)\right\rvert^2\,d\nu_k(\xi)
   +\int_{\left\lvert \xi\right\rvert\ge r}\left\lvert \mathcal{F}_{k}v(\xi)\right\rvert^2\,d\nu_k(\xi)\\
 &\le b_kr^{d_k}\left\lVert v\right\rVert_1^2+r^{-s}\mathcal{E}_{k,s}(v).
\end{align*}
Minimizing the right-hand side in $r$ yields \eqref{eq:GN}.
\end{proof}
\begin{remark}
This interpolation estimate  implies $X_{k,s}\subset L^2(\nu_k)$. 
\end{remark}
\subsection{First-order Dunkl energy and level sets}

It is known that the Dunkl
operators are skew-symmetric with respect to $d\nu_k$, namely,
\[
  \int_{\mathbb{R}^d}T_jf(x)\,\overline{g(x)}\,d\nu_k(x)
  =
  -\int_{\mathbb{R}^d}f(x)\,\overline{T_jg(x)}\,d\nu_k(x).
\]
On the other hand, by the definition of the carré du champ,
\[
  \Gamma_k(v)
  =
  \frac12\bigl(\Delta_k|v|^2
  -2\operatorname{Re}(\overline{v}\,\Delta_kv)\bigr).
\]
Now, for smooth compactly supported $v$, the carré-du-champ identity is (see, for example, \cite[Lemma 3.1]{Velicu2020})
\begin{equation}\label{eq:carre}
  \Gamma_k(v)(x)
  =\left\lvert \nabla v(x)\right\rvert^2
   +\sum_{\alpha\in R_+}k(\alpha)
      \frac{\left\lvert v(x)-v(\sigma_\alpha x)\right\rvert^2}{\left\langle \alpha,x\right\rangle^2}.
\end{equation}
Integration by parts gives
\begin{equation}\label{eq:first-energy}
  \sum_{j=1}^d\left\lVert T_jv\right\rVert_2^2
  =\int_{\mathbb{R}^d}\Gamma_k(v)\,d\nu_k
  =\int_{\mathbb{R}^d}\left\lvert \xi\right\rvert^2\left\lvert \mathcal{F}_{k}v(\xi)\right\rvert^2\,d\nu_k(\xi).
\end{equation}
In particular,
\begin{equation}\label{eq:ordinary-grad-bound}
  \int\left\lvert \nabla v\right\rvert^2\,d\nu_k
  \le\sum_{j=1}^d\left\lVert T_jv\right\rVert_2^2.
\end{equation}
By closure, these formulas remain valid on the first-order Dunkl Sobolev space
$H_k^1$. We now turn to the weighted Stampacchia property and the
reflection-localization principle.

\begin{lemma}\label{lem:stamp}
If $v\in H_k^1$ is real-valued, then
\[
  \nabla v=0\qquad \nu_k\text{-a.e. on }\{v=0\}.
\]
More generally, $\nabla v=0$ $\nu_k$-almost everywhere on every level set
$\{v=c\}$.
\end{lemma}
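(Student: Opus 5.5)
The plan is to reduce the statement to the classical Stampacchia theorem for Sobolev functions on open sets, using that on compact subsets of the complement of the reflecting hyperplanes the weight is smooth and bounded above and below. Set
\[
\mathcal H:=\bigcup_{\alpha\in R_+,\,k(\alpha)>0}H_\alpha .
\]
This is a finite union of hyperplanes, so it is Lebesgue null and hence $\nu_k$-null. Moreover $\nu_k$ and Lebesgue measure have the same null sets, because $w_k>0$ off $\mathcal H$. It therefore suffices to prove that $\nabla v=0$ Lebesgue-a.e. on $\{v=c\}\cap U$ for every open set $U\Subset\mathbb R^d\setminus\mathcal H$, and then exhaust $\mathbb R^d\setminus\mathcal H$ by countably many such $U$.

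\textbf{Step 1: $v\in W^{1,2}_{\mathrm{loc}}(\mathbb R^d\setminus\mathcal H)$.} Take a Schwartz approximating sequence $v_n\to v$ in $H_k^1$; Schwartz functions are dense by the Fourier description of $H_k^1$. By \eqref{eq:first-energy} and \eqref{eq:ordinary-grad-bound},
\[
\int|\nabla(v_n-v_m)|^2\,d\nu_k\le\sum_j\|T_j(v_n-v_m)\|_2^2\to0,
\]
and $v_n\to v$ in $L^2(\nu_k)$. On $U$ the weight $w_k$ is bounded between two positive constants. Hence $(v_n)$ is Cauchy in the unweighted $W^{1,2}(U)$, and its limit is $v|_U$. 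The distributional gradient of $v$ on $U$ is the $L^2(U)$-limit of $\nabla v_n$, so it agrees with the object called $\nabla v$ in \eqref{eq:ordinary-grad-bound} "by closure". Identifying these two notions of gradient is the only delicate point, and this closure argument settles it.

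\textbf{Step 2: classical Stampacchia on $U$.} For $w\in W^{1,1}_{\mathrm{loc}}(U)$ we have $\nabla w=0$ a.e. on $\{w=0\}$. I will either quote this or give the short proof. Apply the chain rule to the Lipschitz functions $\Phi_\varepsilon(t)=\min(\max(t/\varepsilon,-1),1)$, which gives $\nabla\Phi_\varepsilon(w)=\varepsilon^{-1}\nabla w\,\mathbf 1_{\{|w|<\varepsilon\}}$. Pairing with a test field $\phi\in C_c^1(U;\mathbb R^d)$ gives
\[
\int\nabla w\cdot\phi\,\mathbf 1_{\{|w|<\varepsilon\}}=-\varepsilon\int\Phi_\varepsilon(w)\operatorname{div}\phi=O(\varepsilon).
\]
Letting $\varepsilon\to0$ and using dominated convergence yields $\int_{\{w=0\}}\nabla w\cdot\phi=0$ for all $\phi$. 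Apply this to $w=v-c$; constants lie in $W^{1,2}_{\mathrm{loc}}(U)$ even though they are not in $H^1_k$. This gives $\nabla v=0$ a.e. on $\{v=c\}\cap U$.

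\textbf{Step 3: conclusion.} Cover $\mathbb R^d\setminus\mathcal H$ by countably many such $U$, for example dyadic cubes with closures avoiding $\mathcal H$. Then $\nabla v=0$ Lebesgue-a.e. on $\{v=c\}\setminus\mathcal H$. Since $\mathcal H$ and Lebesgue-null sets are $\nu_k$-null, the identity holds $\nu_k$-a.e. The only real subtlety I anticipate is the one flagged in Step 1: checking that the weighted $H_k^1$ structure yields genuine local unweighted Sobolev regularity, which is what requires staying away from the hyperplanes where $w_k$ degenerates.
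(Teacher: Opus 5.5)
Your proposal is correct and follows the same route as the paper, which also uses that $w_k$ is bounded above and below on compact subsets of the complement of the reflecting hyperplanes, obtains $v\in W^{1,2}_{\mathrm{loc}}$ there from \eqref{eq:ordinary-grad-bound}, quotes the classical Stampacchia level-set theorem, and exhausts the complement. Your Step~1, which identifies the closure-defined $\nabla v$ with the local weak gradient, and your Step~2 only make explicit what the paper leaves implicit. In Step~2, take the piecewise-linear chain rule from the positive-part formula $\nabla w^+=\mathbf 1_{\{w>0\}}\nabla w$ rather than from the general Lipschitz chain rule, since the latter is usually proved using Stampacchia itself; the positive-part formula gives the indicator $\mathbf 1_{\{-\varepsilon<w\le\varepsilon\}}$, which still shrinks to $\{w=0\}$.
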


\begin{proof}
The union of the reflection hyperplanes $H_{\alpha}
   =\{x\in\mathbb{R}^{d}:\langle x,\alpha\rangle=0\}$, has Lebesgue, hence $\nu_k$, measure zero.
On a compact set contained in a connected component of the complement of those
hyperplanes, the weight $w_k$ is bounded above and below by positive constants.
The estimate \eqref{eq:ordinary-grad-bound} therefore implies that $v$ belongs to the
usual local Sobolev space $W^{1,2}_{\mathrm{loc}}$ there.  The classical Stampacchia
level-set theorem gives $\nabla v=0$ almost everywhere on $\{v=c\}$ inside each such
compact set.  Exhausting the complement of the hyperplanes proves the claim.
\end{proof}

\begin{lemma}\label{lem:reflection-localization}
Let $v\in H_k^1$ and $v$ is real-valued. Also, let $\Omega=\{v\ne0\}$, and define
\[
  \Omega^\sharp
  =\Omega\cup\bigcup_{\substack{\alpha\in R_+\\k(\alpha)>0}}
       \sigma_\alpha(\Omega).
\]
Then $\nabla_{k} v=0\   \nu_k\text{-a.e. on  }\mathbb{R}^d\setminus\Omega^\sharp$. Moreover,
\[
  \nu_k(\Omega^\sharp)\le(M_k+1)\nu_k(\Omega).
\]
If $\Omega$ is $G$-invariant, then $\Omega^\sharp=\Omega$.
\end{lemma}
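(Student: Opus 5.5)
The plan is to work pointwise with the explicit formula for the Dunkl derivative. The formula \eqref{eq:intro-reflection-obstruction} is first justified for $v\in H_k^1$ almost everywhere, not only for Schwartz functions.

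\textbf{Step 1: the pointwise formula on $H_k^1$.} Take Schwartz $v_n\to v$ in $H_k^1$. Then $T_jv_n\to T_jv$ in $L^2(\nu_k)$. By \eqref{eq:ordinary-grad-bound} and \eqref{eq:carre}, the ordinary gradients $\nabla v_n$ converge in $L^2(\nu_k)$, and so do the difference quotients $(v_n(x)-v_n(\sigma_\alpha x))/\langle\alpha,x\rangle$ for each $\alpha$ with $k(\alpha)>0$. The limits are identified as follows:
\begin{enumerate}[label=\textup{(\alph*)}]
\item the gradient limit is $\nabla v$, by the local $W^{1,2}$ argument of Lemma~\ref{lem:stamp} away from the hyperplanes;
\item the quotient limit is $(v(x)-v(\sigma_\alpha x))/\langle\alpha,x\rangle$, via a.e.\ convergence along a subsequence, using that $\sigma_\alpha$ preserves $\nu_k$.
\end{enumerate}
This gives, for $\nu_k$-a.e.\ $x$,
\[
T_jv(x)=\partial_jv(x)+\sum_{\alpha\in R_+,\,k(\alpha)>0}k(\alpha)\alpha_j\frac{v(x)-v(\sigma_\alpha x)}{\langle\alpha,x\rangle}.
\]
Terms with $k(\alpha)=0$ are simply absent. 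The hyperplanes are null sets, so the division is harmless.

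\textbf{Step 2: vanishing outside $\Omega^\sharp$.} Let $x\notin\Omega^\sharp$ lie off the hyperplanes. Then $v(x)=0$. For each $\alpha$ with $k(\alpha)>0$ we have $x\notin\sigma_\alpha(\Omega)$, and since $\sigma_\alpha$ is an involution this means $\sigma_\alpha x\notin\Omega$, i.e.\ $v(\sigma_\alpha x)=0$. So every difference quotient vanishes at $x$. By Lemma~\ref{lem:stamp}, $\nabla v=0$ at $\nu_k$-a.e.\ point of $\{v=0\}$, hence at a.e.\ point of $\mathbb{R}^d\setminus\Omega^\sharp$. Combining with the Step 1 formula gives $\nabla_kv=0$ a.e.\ there.

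\textbf{Step 3: measure bound and $G$-invariant case.} The measure bound follows from subadditivity together with $\nu_k(\sigma_\alpha(\Omega))=\nu_k(\Omega)$. This holds because $w_k$ is $G$-invariant ($G$ permutes $R$ up to sign) and $\sigma_\alpha$ is orthogonal. There are $M_k$ reflected sets, giving the factor $M_k+1$. If $\Omega$ is $G$-invariant, then $\sigma_\alpha(\Omega)=\Omega$ for every $\alpha$, so $\Omega^\sharp=\Omega$.

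The main obstacle is Step 1: passing the pointwise formula to $H_k^1$. This requires controlling each difference-quotient term separately in $L^2(\nu_k)$, which \eqref{eq:carre} supplies since all terms there are nonnegative. It also requires identifying the weak limit with the expression built from $v$ itself; this is where the a.e.\ subsequence argument is used.
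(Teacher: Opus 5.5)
Your proposal is correct and follows the paper's route. At a point outside $\Omega^\sharp$, off the reflection hyperplanes and the Stampacchia null set, you have $v(x)=0$, $\nabla v(x)=0$ and $v(\sigma_\alpha x)=0$ for every $\alpha$ with $k(\alpha)>0$, so the differential--difference formula forces $T_jv(x)=0$; the measure bound is the union bound together with reflection invariance of $\nu_k$. Your Step~1 justifies that pointwise formula for $v\in H_k^1$ using Schwartz approximation, the nonnegativity of each term in the carr\'e du champ identity, and an a.e.\ subsequence argument; the paper uses this formula without comment, and you handle it correctly.
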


\begin{proof}
Take $x\notin\Omega^\sharp$ away from the reflection hyperplanes and from a null set
on which Lemma~\ref{lem:stamp} may fail.  Then $v(x)=0$,
$\nabla v(x)=0$, and $v(\sigma_\alpha x)=0$ for every $\alpha$ with $k(\alpha)>0$.
The differential-difference formula for $T_j$ therefore gives $T_jv(x)=0$ for every
$j$.  The measure estimate follows from $G$-invariance of $\nu_k$ and the union bound.
The last assertion is immediate.
\end{proof}

\section{Energies, minimizers, and variational inequalities}\label{sec: Energy, min and var}

Fix throughout this section $0<s<2$, $\lambda>0$, and a nonnegative
$f\in L^1(\nu_k)\cap L^2(\nu_k)$.  Define
\begin{equation}\label{eq:J}
  J_{\lambda}^{k,s}(v)
   =\frac12\mathcal{E}_{k,s}(v)-\int_{\mathbb{R}^d}(f-\lambda)v\,d\nu_k,
\end{equation}
for $v\in X_{k,s}$, and let
\[
  \mathcal{K}_{k,s}:=\{v\in X_{k,s}:v\ge0\text{ a.e.}\}.
\]
For $v\in\mathcal{K}_{k,s}$, we have
\begin{equation}\label{eq:J-positive}
  J_{\lambda}^{k,s}(v)
   =\frac12\mathcal{E}_{k,s}(v)-\int fv\,d\nu_k
      +\lambda\left\lVert v\right\rVert_{L^1(\nu_k)}.
\end{equation}
We first prove that the functional $J_{\lambda}^{k,s}$ is
coercive on the admissible set $\mathcal{K}_{k,s}$. More precisely,
$J_{\lambda}^{k,s}(v)$ controls both the fractional Dunkl energy
$\mathcal{E}_{k,s}(v)$ and the $L^1(\nu_k)$-norm of $v$, up to a
fixed constant $C_0$.
\begin{proposition}\label{prop:coercive}
There exists a finite constant
$C_0=C_0(k,d,s,\lambda,\left\lVert f\right\rVert_2)$ such that
\begin{equation}\label{eq:coercive}
  J_{\lambda}^{k,s}(v)
  \ge\frac14\mathcal{E}_{k,s}(v)
      +\frac\lambda2\left\lVert v\right\rVert_{L^1(\nu_k)}-C_0
\end{equation}
for every $v\in\mathcal{K}_{k,s}$.
\end{proposition}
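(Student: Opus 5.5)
The plan is to control the only term of indefinite sign, $\int fv\,d\nu_k$, by the $L^2$ norm of $v$. We then bound that $L^2$ norm by the interpolation inequality of Lemma~\ref{lem:GN}, and absorb the result into half of the energy and half of the $L^1$ term using Young's inequality.

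Fix $v\in\mathcal{K}_{k,s}$. By \eqref{eq:J-positive} and Cauchy--Schwarz,
\[
J_{\lambda}^{k,s}(v)\ge \tfrac12\mathcal{E}_{k,s}(v)+\lambda\|v\|_{L^1(\nu_k)}-\|f\|_{2}\|v\|_{2}.
\]
Write $E=\mathcal{E}_{k,s}(v)$ and $N=\|v\|_{L^1(\nu_k)}$. Lemma~\ref{lem:GN} gives
\[
\|v\|_2\le C_{k,s}^{1/2}N^{\theta}E^{\beta},\qquad \theta=\frac{s}{d_k+s},\quad \beta=\frac{d_k}{2(d_k+s)}.
\]
The exponents satisfy $\theta+\beta<1$, since $\theta+\beta=\frac{s+d_k/2}{d_k+s}$.

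Set $a=\|f\|_2C_{k,s}^{1/2}$. It remains to show that
\[
aN^\theta E^\beta\le \tfrac14E+\tfrac\lambda2N+C_0 .
\]
We use the three-term Young inequality with exponents $p=1/\beta$, $q=1/\theta$, and $r=1/(1-\theta-\beta)$, which are conjugate because $\beta+\theta+(1-\theta-\beta)=1$. Writing
\[
aN^\theta E^\beta=(\epsilon_1E)^\beta(\epsilon_2N)^\theta\cdot a\,\epsilon_1^{-\beta}\epsilon_2^{-\theta},
\]
Young's inequality yields
\[
aN^\theta E^\beta\le \beta\epsilon_1E+\theta\epsilon_2N+(1-\theta-\beta)\bigl(a\epsilon_1^{-\beta}\epsilon_2^{-\theta}\bigr)^{1/(1-\theta-\beta)} .
\]
We choose $\epsilon_1=1/(4\beta)$ and $\epsilon_2=\lambda/(2\theta)$. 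The last term is then a finite constant $C_0$ depending only on $k,d,s,\lambda$ and $\|f\|_2$, through $d_k$, $b_k$ and $C_{k,s}$. Substituting this bound into the display above gives \eqref{eq:coercive}.

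The only point needing care is the exponent check $\theta+\beta<1$. Without it, no additive constant could absorb the cross term. Here it holds because $d_k/2<d_k$, that is, because the interpolation exponent on the energy is strictly less than $1/2$. The degenerate cases are harmless: if $f=0$ we may take $C_0=0$, and if $N=0$ or $E=0$ then $v=0$ a.e. by Lemma~\ref{lem:GN}, so the inequality is trivial.
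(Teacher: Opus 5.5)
Your proposal is correct and follows the paper's proof: Cauchy--Schwarz, then the interpolation bound of Lemma~\ref{lem:GN}, then a weighted Young inequality that absorbs $\frac14\mathcal{E}_{k,s}(v)$ and $\frac\lambda2\|v\|_{L^1(\nu_k)}$; your three-term Young inequality with exponents $1/\beta$, $1/\theta$, $1/(1-\theta-\beta)$ spells out the step the paper leaves implicit. One cosmetic point: since $\theta+2\beta=1$, you have $1-\theta-\beta=\beta$, so $\theta+\beta<1$ is equivalent to $\beta>0$ (i.e.\ $d_k>0$), not to $\beta<1/2$ as your closing comment suggests, but this does not affect the argument.
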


\begin{proof}
By Cauchy--Schwarz and Lemma~\ref{lem:GN},
\[
  \int fv\,d\nu_k
  \le \left\lVert f\right\rVert_2C_{k,s}^{1/2}
      A^{\frac{s}{{d_k}+s}}
      E^{\frac{{d_k}}{2({d_k}+s)}},
\]
where $A=\left\lVert v\right\rVert_1$ and $E=\mathcal{E}_{k,s}(v)$.  The two exponents sum to less than one;
the remaining exponent equals ${d_k}/[2({d_k}+s)]$.  Weighted Young's inequality,
with coefficients chosen to absorb $\lambda A/2$ and $E/4$, gives
\[
  \int fv\,d\nu_k\le\frac\lambda2A+\frac14E+C_0.
\]
Inserting this into \eqref{eq:J-positive} completes the proof.
\end{proof}
We next establish the existence and uniqueness of a minimizer for the
functional $J_{\lambda}^{k,s}$.
\begin{theorem}\label{thm:minimizer}
The functional $J_{\lambda}^{k,s}$ has a unique minimizer
$u\in\mathcal{K}_{k,s}$.
\end{theorem}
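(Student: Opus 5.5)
We use the direct method of the calculus of variations, with uniqueness coming from strict convexity. By Proposition~\ref{prop:coercive}, $J_{\lambda}^{k,s}$ is bounded below on $\mathcal{K}_{k,s}$ by $-C_0$. Also $0\in\mathcal{K}_{k,s}$ with $J_\lambda^{k,s}(0)=0$. Hence $m:=\inf_{\mathcal{K}_{k,s}}J_\lambda^{k,s}\in[-C_0,0]$ is finite.

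Take a minimizing sequence $(v_n)\subset\mathcal{K}_{k,s}$. By \eqref{eq:coercive}, both $\mathcal{E}_{k,s}(v_n)$ and $\|v_n\|_{L^1(\nu_k)}$ are uniformly bounded. Lemma~\ref{lem:GN} then bounds $\|v_n\|_{L^2(\nu_k)}$ uniformly. Consequently the sequence $\mathcal{F}_k v_n$ is bounded in $L^2((1+|\xi|^s)\,d\nu_k)$.

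Pass to a subsequence with $v_n\rightharpoonup u$ weakly in $L^2(\nu_k)$. Equivalently, $\mathcal{F}_kv_n\rightharpoonup\mathcal{F}_ku$ weakly in $L^2(|\xi|^s d\nu_k)$ as well. Weak lower semicontinuity of this Hilbert norm gives
\[
\mathcal{E}_{k,s}(u)\le\liminf_n\mathcal{E}_{k,s}(v_n).
\]
Since $f\in L^2(\nu_k)$, we have $\int fv_n\,d\nu_k\to\int fu\,d\nu_k$. Nonnegativity passes to the limit, because the closed convex cone of nonnegative functions is weakly closed in $L^2$.

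The main obstacle is the $L^1$ term $\lambda\|v\|_{L^1}$ and showing that $u\in L^1(\nu_k)$. Nonnegativity resolves it. For $v_n,u\ge0$ and any bounded set $B$, testing the weak convergence against $\mathbf 1_B\in L^2(\nu_k)$ gives
\[
\int_B u\,d\nu_k=\lim_n\int_B v_n\,d\nu_k\le\liminf_n\|v_n\|_{L^1(\nu_k)}.
\]
Exhausting $\mathbb{R}^d$ by balls, monotone convergence yields
\[
\|u\|_{L^1(\nu_k)}\le\liminf_n\|v_n\|_{L^1(\nu_k)}<\infty.
\]
Thus $u\in\mathcal{K}_{k,s}$.

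Combining the three pieces through \eqref{eq:J-positive}, with care taken by using liminf of sums $\ge$ sum of liminfs, gives $J_\lambda^{k,s}(u)\le\liminf_n J_\lambda^{k,s}(v_n)=m$. So $u$ is a minimizer.

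For uniqueness, note that $\mathcal{K}_{k,s}$ is convex. On $\mathcal{K}_{k,s}$ the term $\lambda\|v\|_1=\lambda\int v\,d\nu_k$ is linear, and $\int fv$ is linear. Suppose $u_1,u_2$ are both minimizers and set $w=(u_1+u_2)/2$. The parallelogram identity for the quadratic form gives
\[
J_\lambda^{k,s}(w)=\tfrac12J_\lambda^{k,s}(u_1)+\tfrac12J_\lambda^{k,s}(u_2)-\tfrac18\mathcal{E}_{k,s}(u_1-u_2).
\]
Minimality then forces $\mathcal{E}_{k,s}(u_1-u_2)=0$, so $|\xi|^s|\mathcal{F}_k(u_1-u_2)|^2=0$ a.e. Hence $\mathcal{F}_k(u_1-u_2)=0$ a.e., and since $u_1-u_2\in L^2(\nu_k)$ by Lemma~\ref{lem:GN}, Plancherel gives $u_1=u_2$.
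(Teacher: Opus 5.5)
Your proposal is correct and follows the paper's proof essentially step for step. The common steps are coercivity bounds from Proposition~\ref{prop:coercive}, an $L^2(\nu_k)$-weak limit via Lemma~\ref{lem:GN}, lower semicontinuity of the $L^1$ norm by testing against bounded sets and exhausting, and uniqueness from the parallelogram identity. The only cosmetic difference is in how you get $\mathcal{E}_{k,s}(u)\le\liminf_n\mathcal{E}_{k,s}(v_n)$: you use weak convergence in $L^2(|\xi|^s\,d\nu_k)$, which is not literally ``equivalent'' to weak convergence in $L^2(\nu_k)$ but does follow from your uniform bound (a bounded sequence converging against a dense class of test functions). The paper instead uses the truncated seminorms over $\{|\xi|<R\}$ and lets $R\to\infty$.
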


\begin{proof}
Since $J_{\lambda}^{k,s}(0)=0$ and Proposition~\ref{prop:coercive} bounds the
functional from below, choose a minimizing sequence $v_m$.  Proposition~\ref{prop:coercive} gives
\[
 \sup_m\bigl(\mathcal{E}_{k,s}(v_m)+\left\lVert v_m\right\rVert_1\bigr)<\infty.
\]
Lemma~\ref{lem:GN} then makes $v_m$ bounded in $L^2(\nu_k)$.  Passing to a
subsequence, $v_m\rightharpoonup u$ weakly in $L^2$.

Testing against nonnegative compactly supported functions shows $u\ge0$.
Likewise, testing against $0\le\phi\le1$ and then exhausting $\mathbb{R}^d$ gives
\[
  \left\lVert u\right\rVert_1\le\liminf_m\left\lVert v_m\right\rVert_1.
\]
The Dunkl transforms converge weakly in $L^2$, and for each $R>0$ the seminorm
\[
  g\mapsto\left(\int_{\left\lvert \xi\right\rvert<R}\left\lvert \xi\right\rvert^{s}\left\lvert g(\xi)\right\rvert^2
        \,d\nu_k(\xi)\right)^{1/2}
\]
is weakly lower semicontinuous.  Letting $R\to\infty$ gives
$\mathcal{E}_{k,s}(u)\le\liminf_m\mathcal{E}_{k,s}(v_m)$.  Since $f\in L^2$, the linear term
$\int fv_m$ converges to $\int fu$, and $u$ is a minimizer.

For uniqueness, the parallelogram identity gives, for $v,w\in\mathcal{K}_{k,s}$,
\[
 J_{\lambda}^{k,s}\left(\frac{v+w}{2}\right)
 =\frac12J_{\lambda}^{k,s}(v)
  +\frac12J_{\lambda}^{k,s}(w)
  -\frac18\mathcal{E}_{k,s}(v-w).
\]
Two minimizers therefore satisfy $\mathcal{E}_{k,s}(v-w)=0$.  Since
$\left\lvert \xi\right\rvert^{s}>0$ away from the singleton $\{0\}$, which has $\nu_k$-measure
zero, Plancherel gives $v=w$.
\end{proof}
We next provide a variational characterization of the minimizer $u$ through a variational
inequality: $u$ is the unique element of the admissible set for which
every admissible variation $v-u$ satisfies the stated energy inequality.
It also gives an energy identity showing that the fractional Dunkl energy
of $u$ is exactly equal to the pairing of $u$ with the forcing term
$f-\lambda$.
\begin{theorem}\label{thm:VI}
The minimizer $u$ satisfies
\begin{equation}\label{eq:VI}
  \mathcal{E}_{k,s}(u,v-u)
   \ge\int_{\mathbb{R}^d}(f-\lambda)(v-u)\,d\nu_k
  \qquad \text{for any }v\in\mathcal{K}_{k,s}.
\end{equation}
Conversely, any $u\in\mathcal{K}_{k,s}$ satisfying \eqref{eq:VI} is the unique minimizer.
Moreover,
\begin{equation}\label{eq:energy-identity}
  \mathcal{E}_{k,s}(u)=\int_{\mathbb{R}^d}(f-\lambda)u\,d\nu_k.
\end{equation}
\end{theorem}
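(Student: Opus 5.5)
The plan is the standard convex-analysis argument for a minimizer over a convex cone. It uses three facts: $\mathcal K_{k,s}$ is convex, $J^{k,s}_\lambda$ is a quadratic functional on $X_{k,s}$, and $\mathcal K_{k,s}$ is stable under positive dilations.

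\emph{Necessity of \eqref{eq:VI}.} Fix $v\in\mathcal K_{k,s}$. For $t\in(0,1]$ the function $u+t(v-u)=(1-t)u+tv$ lies in $\mathcal K_{k,s}$. It has finite energy because $\mathcal E_{k,s}$ is a quadratic form, so $\mathcal E_{k,s}(a+b)\le 2\mathcal E_{k,s}(a)+2\mathcal E_{k,s}(b)$. It lies in $L^1$ and is nonnegative. Since $f\in L^2$ and $X_{k,s}\subset L^2(\nu_k)$ by Lemma~\ref{lem:GN}, the term $\int f(v-u)\,d\nu_k$ is finite. Since $u,v\in L^1$, the term $\lambda\int(v-u)\,d\nu_k$ is finite. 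Expanding the energy by polarization, for real-valued functions we get
\[
J^{k,s}_\lambda(u+t(v-u))-J^{k,s}_\lambda(u)=t\Bigl[\mathcal E_{k,s}(u,v-u)-\int(f-\lambda)(v-u)\,d\nu_k\Bigr]+\frac{t^2}{2}\mathcal E_{k,s}(v-u).
\]
The left side is nonnegative by minimality. Dividing by $t$ and letting $t\downarrow0$ gives \eqref{eq:VI}. The bilinear form is real here, since $\mathcal F_k$ of a real function satisfies $\overline{\mathcal F_kv(\xi)}=\mathcal F_kv(-\xi)$; alternatively one can use the real representation in Proposition~\ref{prop:gagliardo-dunkl}.

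\emph{Sufficiency.} Suppose $u\in\mathcal K_{k,s}$ satisfies \eqref{eq:VI}. Take the identity above at $t=1$:
\[
J^{k,s}_\lambda(v)-J^{k,s}_\lambda(u)=\Bigl[\mathcal E_{k,s}(u,v-u)-\int(f-\lambda)(v-u)\,d\nu_k\Bigr]+\tfrac12\mathcal E_{k,s}(v-u)\ge0 .
\]
So $u$ is a minimizer, and by the uniqueness in Theorem~\ref{thm:minimizer} it is the minimizer.

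\emph{Energy identity.} Test \eqref{eq:VI} with $v=2u$ and with $v=0$; both lie in $\mathcal K_{k,s}$, since the cone is invariant under nonnegative scaling and contains $0$. The choice $v=2u$ gives $\mathcal E_{k,s}(u,u)\ge\int(f-\lambda)u\,d\nu_k$. The choice $v=0$ gives $-\mathcal E_{k,s}(u,u)\ge-\int(f-\lambda)u\,d\nu_k$. Together these yield \eqref{eq:energy-identity}.

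The only real obstacle is bookkeeping. Every term must be finite: the linear term needs $u,v\in L^2$ via Lemma~\ref{lem:GN}, and the $\lambda$-term needs $L^1$. One must also justify treating the polarized form as real and symmetric, which follows from the Plancherel definition and Proposition~\ref{prop:gagliardo-dunkl}. Beyond that the argument is elementary.
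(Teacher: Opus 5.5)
Your proposal is correct and follows the paper's proof essentially verbatim: the right derivative of $t\mapsto J^{k,s}_\lambda(u+t(v-u))$ at $t=0$ gives necessity, and the exact quadratic expansion at $t=1$ together with the uniqueness from Theorem~\ref{thm:minimizer} gives sufficiency. The test functions $v=0$ and $v=2u$ then give the energy identity, and your bookkeeping fills in details the paper leaves implicit: finiteness of the linear terms via Lemma~\ref{lem:GN}, and realness of the polarized form for real-valued functions.
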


\begin{proof}
For $v\in\mathcal{K}_{k,s}$, the function
$t\mapsto J_{\lambda}^{k,s}(u+t(v-u))$ has a minimum at $t=0$ on $[0,1]$.
Its right derivative at zero yields \eqref{eq:VI}.  Conversely,
\begin{align*}
 J_{\lambda}^{k,s}(v)-J_{\lambda}^{k,s}(u)
 &=\frac12\mathcal{E}_{k,s}(v-u)
   +\mathcal{E}_{k,s}(u,v-u)
   -\int(f-\lambda)(v-u)\,d\nu_k\\
 &\ge0.
\end{align*}
Finally using $v=0$ and $v=2u$ in \eqref{eq:VI} proves \eqref{eq:energy-identity}.
\end{proof}

\section{The Dunkl Lewy--Stampacchia estimate}\label{sec:Lewy--Stampacchia}

Define a distribution $\eta$ on $C_c^\infty(\mathbb{R}^d)$ by
\begin{equation}\label{eq:eta-def}
  \langle\eta,\phi\rangle
  :=\mathcal{E}_{k,s}(u,\phi)-\int_{\mathbb{R}^d}(f-\lambda)\phi\,d\nu_k.
\end{equation}
Equivalently,
\[
  \eta=A_{k}^{s/2}u-(f-\lambda)
\]
in the distributional sense relative to $\nu_k$.
If $\phi\ge0$, then $u+\phi\in\mathcal{K}_{k,s}$ and the variational inequality \eqref{eq:VI} gives
$\langle\eta,\phi\rangle\ge0$.  Thus $\eta$ is a positive Radon measure.
We next establish a Lewy--Stampacchia type estimate.
\begin{theorem}\label{thm:LS}
For every nonnegative $\phi\in C_c^\infty(\mathbb{R}^d)$,
\begin{equation}\label{eq:LS}
  0\le\langle\eta,\phi\rangle
  \le\int_{\mathbb{R}^d}(\lambda-f)^+\phi\,d\nu_k
  \le\lambda\left\lVert \phi\right\rVert_{L^1(\nu_k)}.
\end{equation}
Consequently, $\eta$ is absolutely continuous with respect to $\nu_k$ and its density,
still denoted by $\eta$, satisfies
\begin{equation}\label{eq:eta-density}
  0\le\eta\le(\lambda-f)^+\le\lambda
  \qquad\text{a.e.}
\end{equation}
\end{theorem}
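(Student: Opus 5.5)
The lower bound $\langle\eta,\phi\rangle\ge0$ was already obtained before the statement, by testing \eqref{eq:VI} with $v=u+\phi$. The last inequality is trivial because $(\lambda-f)^+\le\lambda$, since $f\ge0$. So the substance is the middle inequality, and I would prove it by the standard Lewy--Stampacchia penalization-by-truncation argument, adapted to the jump form of Proposition~\ref{prop:gagliardo-dunkl}.

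Fix a nonnegative $\phi\in C_c^\infty(\mathbb{R}^d)$ and $\varepsilon>0$, and test \eqref{eq:VI} with the admissible competitor
\[
v_\varepsilon:=(u-\varepsilon\phi)^+ .
\]
This lies in $\mathcal K_{k,s}$: it is nonnegative, it is in $L^1$ because $0\le v_\varepsilon\le u$, and it has finite energy by Lemma~\ref{lem:markov} applied to $u-\varepsilon\phi\in X_{k,s}$. Writing $w_\varepsilon:=(u-\varepsilon\phi)^-$, we have $v_\varepsilon-u=-\varepsilon\phi+w_\varepsilon$, and $0\le w_\varepsilon\le\varepsilon\phi$, with $w_\varepsilon$ supported in $\{u<\varepsilon\phi\}$. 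By \eqref{eq:eta-def}, \eqref{eq:VI} reads $\langle\eta,v-u\rangle\ge0$, extended by density to such test differences. This gives
\[
\varepsilon\langle\eta,\phi\rangle\le \mathcal E_{k,s}(u,w_\varepsilon)-\int(f-\lambda)w_\varepsilon\,d\nu_k .
\]
Next I would write $u=(u-\varepsilon\phi)+\varepsilon\phi$ and use Lemma~\ref{lem:markov} in the form $\mathcal E_{k,s}(u-\varepsilon\phi,w_\varepsilon)\le0$. This yields $\mathcal E_{k,s}(u,w_\varepsilon)\le\varepsilon\,\mathcal E_{k,s}(\phi,w_\varepsilon)$, and hence
\[
\langle\eta,\phi\rangle\le \mathcal E_{k,s}\bigl(\phi,\varepsilon^{-1}w_\varepsilon\bigr)+\int(\lambda-f)\,\varepsilon^{-1}w_\varepsilon\,d\nu_k .
\]

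Set $g_\varepsilon:=\varepsilon^{-1}w_\varepsilon=(\phi-u/\varepsilon)^+$, so that $0\le g_\varepsilon\le\phi$. As $\varepsilon\downarrow0$ one has $g_\varepsilon\uparrow\phi\,\mathbf 1_{\{u=0\}}$ pointwise. The second term is then bounded by $\int(\lambda-f)^+\phi\,\mathbf 1_{\{u=0\}}\,d\nu_k\le\int(\lambda-f)^+\phi\,d\nu_k$, because $(\lambda-f)g_\varepsilon\le(\lambda-f)^+\phi$.

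\textbf{Main obstacle.} The hard step is the energy term $\mathcal E_{k,s}(\phi,g_\varepsilon)$: I must show its $\limsup$ is $\le0$. The Markov property gives $\mathcal E_{k,s}(g_\varepsilon)\le\mathcal E_{k,s}(\phi-u/\varepsilon)$, which blows up, so no uniform bound is available. I would first try a refinement of the competitor rather than a limit. The standard trick is to replace $\varepsilon\phi$ by $\varepsilon\psi$ with $\psi:=\min(\phi,\,u/\varepsilon)$-type truncations. Alternatively, I would carry out the usual Lewy--Stampacchia route via the penalized problem
\[
A_k^{s/2}u_\delta+\theta_\delta(u_\delta)(\lambda-f)^+=f-\lambda+(\lambda-f)^+ ,
\]
with $\theta_\delta$ a smooth approximation of $\mathbf 1_{(-\infty,0]}$. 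Monotonicity (Lemmas~\ref{lem:disjoint} and \ref{lem:markov}) would show $u_\delta\ge0$ and $u_\delta\to u$. Then $\eta_\delta=\theta_\delta(u_\delta)(\lambda-f)^+\in[0,(\lambda-f)^+]$ passes to the weak-$*$ limit $\eta$. I expect this penalization to be the robust route: existence for the penalized equation follows from the same convex minimization as Theorem~\ref{thm:minimizer}, and uniqueness of the limit follows from Theorem~\ref{thm:VI}.

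Once \eqref{eq:LS} holds for all nonnegative $\phi$, $\eta$ is a positive Radon measure dominated by $\lambda\nu_k$. Hence $\eta$ is absolutely continuous with respect to $\nu_k$ by Radon--Nikodym, and testing against approximate identities, i.e.\ Lebesgue differentiation for the doubling measure $\nu_k$, gives the pointwise bounds \eqref{eq:eta-density}.
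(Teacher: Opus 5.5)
\paragraph{There is a gap: the middle inequality is never actually proved.}
Your competitor $(u-\varepsilon\phi)^+$ is exactly the one the paper uses, but your argument stalls at the energy term, and the stall comes from your own bookkeeping. From $\varepsilon\langle\eta,\phi\rangle\le\varepsilon\,\mathcal{E}_{k,s}(\phi,w_\varepsilon)+\int(\lambda-f)w_\varepsilon\,d\nu_k$, dividing by $\varepsilon$ leaves $\mathcal{E}_{k,s}(\phi,w_\varepsilon)$, not $\mathcal{E}_{k,s}(\phi,\varepsilon^{-1}w_\varepsilon)$. You divided the energy term by $\varepsilon$ twice, which is why it appears to blow up.

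With the correct factor, even your crude bound suffices. By Plancherel, $\mathcal{E}_{k,s}(\phi,w_\varepsilon)=\int A_k^{s/2}\phi\,w_\varepsilon\,d\nu_k$, and $A_k^{s/2}\phi\in L^2(\nu_k)$ because its Dunkl transform $|\xi|^s\mathcal{F}_k\phi$ is rapidly decreasing. Since $0\le w_\varepsilon\le\varepsilon\phi$, this gives $|\mathcal{E}_{k,s}(\phi,w_\varepsilon)|\le\varepsilon\|A_k^{s/2}\phi\|_{L^2(\nu_k)}\|\phi\|_{L^2(\nu_k)}\to0$.

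The paper does it differently. It keeps the half of Lemma~\ref{lem:markov} that you discarded, namely $\mathcal{E}_{k,s}(u-\varepsilon\phi,w_\varepsilon)\le-\mathcal{E}_{k,s}(w_\varepsilon)$ rather than merely $\le0$, and completes the square:
\[
\mathcal{E}_{k,s}(u,w_\varepsilon)\le-\mathcal{E}_{k,s}(w_\varepsilon)+\varepsilon\,\mathcal{E}_{k,s}(\phi)^{1/2}\mathcal{E}_{k,s}(w_\varepsilon)^{1/2}\le\frac{\varepsilon^2}{4}\,\mathcal{E}_{k,s}(\phi).
\]
Hence $\langle\eta,\phi\rangle\le\frac{\varepsilon}{4}\mathcal{E}_{k,s}(\phi)+\int(\lambda-f)^+\phi\,d\nu_k$, and letting $\varepsilon\to0^+$ finishes the proof.

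\paragraph{The penalization fallback is mis-signed.}
As written, you abandon the direct route at the decisive step and switch to a penalization that is only sketched. Your equation gives $A_k^{s/2}u_\delta-(f-\lambda)=(1-\theta_\delta(u_\delta))(\lambda-f)^+$, not $\theta_\delta(u_\delta)(\lambda-f)^+$. Moreover, $v\mapsto\theta_\delta(v)(\lambda-f)^+$ is nonincreasing, so the consequences you claim do not follow:
\begin{itemize}
\item the penalized operator is not monotone;
\item the associated functional is not convex, so existence does not follow from the same convex minimization as Theorem~\ref{thm:minimizer};
\item testing with $u_\delta^-$ does not give $u_\delta\ge0$, because on $\{u_\delta<0\}$ the equation reads essentially $A_k^{s/2}u_\delta=f-\lambda$, which has no sign.
\end{itemize}
The standard form is $A_k^{s/2}u_\delta=f-\lambda+\theta_\delta(u_\delta)(\lambda-f)^+$. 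Even with that correction, the convergence $u_\delta\to u$ on the whole space would still need a proof. None of this is needed once the energy term is handled as above.

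\paragraph{The remaining parts are fine.}
The lower bound, the trivial last inequality, and absolute continuity from domination by $\lambda\nu_k$ are all correct. One small point: $g_\varepsilon$ decreases, not increases, to $\phi\mathbf{1}_{\{u=0\}}$. This does not affect the bound you use.
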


\begin{proof}
Fix $\varepsilon>0$ and put $z=u-\varepsilon\phi$ and $v=z^+$.  By the Markov property (Lemma~ \ref{lem:markov}),
$v\in\mathcal{K}_{k,s}$.  Since $v=z+z^-$,
\[
  v-u=-\varepsilon\phi+z^-.
\]
Substituting into \eqref{eq:VI} gives
\begin{equation}\label{eq:LS-start}
  \varepsilon\langle\eta,\phi\rangle
  \le \mathcal{E}_{k,s}(u,z^-)
      +\int_{\mathbb{R}^d}(\lambda-f)z^-\,d\nu_k.
\end{equation}
Now $u=z+\varepsilon\phi$, so Lemma~\ref{lem:markov} and Cauchy--Schwarz imply
\begin{align*}
 \mathcal{E}_{k,s}(u,z^-)
 &=\mathcal{E}_{k,s}(z,z^-)+\varepsilon\mathcal{E}_{k,s}(\phi,z^-)\\
 &\le-\mathcal{E}_{k,s}(z^-)
      +\varepsilon\mathcal{E}_{k,s}(\phi)^{1/2}\mathcal{E}_{k,s}(z^-)^{1/2}\\
 &\le\frac{\varepsilon^2}{4}\mathcal{E}_{k,s}(\phi).
\end{align*}
Since $u\ge0$,
\[
  0\le z^-=(\varepsilon\phi-u)^+\le\varepsilon\phi,
\]
and hence
\[
  \int(\lambda-f)z^-\,d\nu_k
  \le\varepsilon\int(\lambda-f)^+\phi\,d\nu_k.
\]
Inserting these estimates into \eqref{eq:LS-start}, dividing by $\varepsilon$, and letting
$\varepsilon\rightarrow0^{+}$ proves \eqref{eq:LS}.  The absolute-continuity conclusion follows from domination of the
positive measure $\eta$ by $(\lambda-f)^+\nu_k$.
\end{proof}

Recalling that $\lambda$ is the constant density with respect to $\nu_k$, define
\begin{equation}\label{eq:mu-def}
  \mu=\lambda-\eta.
\end{equation}
Then from \eqref{eq:eta-density}, we have
\begin{equation}\label{eq:mu-basic}
  0\le\mu\le\lambda,
  \qquad
  \mu\ge\min\{\lambda,f\},
\end{equation}
and, also from \eqref{eq:eta-def},
\begin{equation}\label{eq:mu-test}
  \int_{\mathbb{R}^d}\mu\phi\,d\nu_k
   =\int_{\mathbb{R}^d}f\phi\,d\nu_k-\mathcal{E}_{k,s}(u,\phi),
  \qquad \phi\in C_c^\infty(\mathbb{R}^d).
\end{equation}
Our next lemma concerns the conservation of mass, namely, that $\mu$ has the same mass as $f$.
\begin{lemma}\label{lem:mass}
The function $\mu$ belongs to $L^1(\nu_k)$ and
\begin{equation}\label{eq:mass}
  \left\lVert \mu\right\rVert_{L^1(\nu_k)}=\left\lVert f\right\rVert_{L^1(\nu_k)}.
\end{equation}
\end{lemma}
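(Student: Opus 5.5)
We need to show that $\mu\in L^1(\nu_k)$ with $\int\mu\,d\nu_k=\int f\,d\nu_k$ (note $\mu\ge0$). The plan is to test \eqref{eq:mu-test} against a family of cutoffs $\phi_R(x)=\phi(x/R)$, where $\phi\in C_c^\infty$, $0\le\phi\le1$ and $\phi=1$ on $B(0,1)$, and then let $R\to\infty$. This gives
\[
\int\mu\phi_R\,d\nu_k=\int f\phi_R\,d\nu_k-\mathcal{E}_{k,s}(u,\phi_R).
\]
By dominated convergence the first term on the right tends to $\|f\|_{L^1(\nu_k)}$. By monotone convergence the left-hand side tends to $\int\mu\,d\nu_k$, at least once we arrange $\phi_R$ to be increasing in $R$ (for instance $\phi$ radial and radially nonincreasing). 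Everything therefore reduces to proving
\[
\mathcal{E}_{k,s}(u,\phi_R)\to0 \quad\text{as } R\to\infty,
\]
and this is the main obstacle.

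I would handle this term on the Dunkl transform side:
\[
\mathcal{E}_{k,s}(u,\phi_R)=\int|\xi|^s\mathcal{F}_ku(\xi)\,\overline{\mathcal{F}_k\phi_R(\xi)}\,d\nu_k(\xi).
\]
By homogeneity of $\nu_k$ and the dilation rule for the Dunkl transform, $\mathcal{F}_k\phi_R(\xi)=R^{d_k}\mathcal{F}_k\phi(R\xi)$, which is an approximate identity concentrating at $\xi=0$ with total mass comparable to $\int\mathcal{F}_k\phi\,d\nu_k$ (essentially $\phi(0)$). Since $u\in L^1(\nu_k)$, the function $\mathcal{F}_ku$ is bounded and continuous. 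Changing variables $\xi=\eta/R$ gives
\[
\mathcal{E}_{k,s}(u,\phi_R)=R^{-s}\int|\eta|^s\mathcal{F}_ku(\eta/R)\,\overline{\mathcal{F}_k\phi(\eta)}\,d\nu_k(\eta),
\]
and hence
\[
|\mathcal{E}_{k,s}(u,\phi_R)|\le R^{-s}\|u\|_{L^1(\nu_k)}\int|\eta|^s|\mathcal{F}_k\phi(\eta)|\,d\nu_k(\eta).
\]
The last integral is finite because $\mathcal{F}_k\phi$ is Schwartz. So this term is $O(R^{-s})\to0$, using only $u\in X_{k,s}\subset L^1(\nu_k)$ together with \eqref{eq:F-L1}.

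Two technical points remain to be checked. First, \eqref{eq:mu-test} was stated for $C_c^\infty$ test functions, and $\phi_R$ is one, so it applies directly. Second, I need the pairing $\mathcal{E}_{k,s}(u,\phi_R)$ to be given by the absolutely convergent Fourier integral above. This holds since $u$ has finite energy and $\phi_R$ is Schwartz, so Cauchy--Schwarz applies. Combining the three limits yields $\int\mu\,d\nu_k=\|f\|_{L^1(\nu_k)}<\infty$, and since $\mu\ge0$ by \eqref{eq:mu-basic}, this is exactly \eqref{eq:mass}.

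If the dilation identity for $\mathcal{F}_k$ needed justification, I would verify it from the homogeneity $E_k(-i\xi,Rx)=E_k(-iR\xi,x)$ and $\nu_k(RE)=R^{d_k}\nu_k(E)$.
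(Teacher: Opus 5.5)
Your proposal is correct and follows essentially the same argument as the paper. Both test \eqref{eq:mu-test} against dilates of a radial, radially nonincreasing cutoff, use the Dunkl dilation rule together with $|\mathcal{F}_k u|\le\|u\|_{L^1(\nu_k)}$ to get $|\mathcal{E}_{k,s}(u,\phi_R)|=O(R^{-s})$, and conclude by monotone convergence. Your extra checks (dominated convergence for the $f$-term, absolute convergence of the Fourier pairing, and the dilation identity via homogeneity of $E_k$ and $\nu_k$) are the details the paper leaves implicit.
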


\begin{proof}
Choose a radial, radially nonincreasing $\chi\in C_c^\infty(\mathbb{R}^d)$ such that
$0\le\chi\le1$, $\chi\equiv1$ on $B(0,1)$, and $\operatorname{supp}\chi\subset B(0,2)$.
Set $\chi_\rho(x)=\chi(x/\rho)$.  Then $\chi_\rho\rightarrow 1$ pointwise.
The Dunkl scaling law gives
\[
  \mathcal{F}_{k}(\chi_\rho)(\xi)=\rho^{d_k}\mathcal{F}_{k}\chi(\rho\xi).
\]
Using $\left\lvert \mathcal{F}_{k}u\right\rvert\le\left\lVert u\right\rVert_1$ and changing variables $\zeta=\rho\xi$,
\begin{align*}
 \left\lvert \mathcal{E}_{k,s}(u,\chi_\rho)\right\rvert
 &\le \rho^{-s}\left\lVert u\right\rVert_1
    \int_{\mathbb{R}^d}\left\lvert \zeta\right\rvert^{s}\left\lvert \mathcal{F}_{k}\chi(\zeta)\right\rvert\,d\nu_k(\zeta)
  \rightarrow0.
\end{align*}
The last integral is finite because the Dunkl transform preserves the Schwartz class.
Applying \eqref{eq:mu-test} to $\chi_\rho$ and using monotone convergence yields
\[
 \int\mu\,d\nu_k=\int f\,d\nu_k.
\]
Since both functions are nonnegative, this is \eqref{eq:mass}.
\end{proof}
Next lemma gives additional regularity for the minimizer $u$ and identifies
the fractional Dunkl operator acting on $u$ in the strong sense. 
\begin{lemma}\label{lem:regularity}
We have
\[
  f-\mu\in L^1(\nu_k)\cap L^2(\nu_k)
\]
and
\begin{equation}\label{eq:strong-op}
  A_{k}^{s/2}u=f-\mu \text{ almost everywhere}.
\end{equation}
  Consequently $u\in H_k^s$ and
\begin{equation}\label{eq:E-strong}
  \mathcal{E}_{k,s}(u,\psi)
   =\int_{\mathbb{R}^d}(f-\mu)\psi\,d\nu_k
\end{equation}
for every $\psi\in X_{k,s}$.  If $s\ge1$, then $u\in H_k^1$.
\end{lemma}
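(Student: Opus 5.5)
First, $f-\mu\in L^1(\nu_k)\cap L^2(\nu_k)$ is immediate from what is already known. We have $f\in L^1\cap L^2$ by assumption. By Lemma~\ref{lem:mass} and \eqref{eq:mu-basic}, $\mu\in L^1$ and $0\le\mu\le\lambda$, so $\mu\in L^\infty$ and hence $\mu\in L^2$ by interpolation.

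Set $g:=f-\mu$. Then \eqref{eq:mu-test} reads
\[
\mathcal{E}_{k,s}(u,\phi)=\int g\phi\,d\nu_k,\qquad \phi\in C_c^\infty(\mathbb{R}^d).
\]
The plan is to extend this identity to all $\phi\in\mathcal S(\mathbb{R}^d)$ and then read it on the Fourier side. Both sides are continuous in $\phi$ for the Schwartz topology, so density of $C_c^\infty$ in $\mathcal S$ gives the extension. Indeed, $|\mathcal{E}_{k,s}(u,\phi)|\le \mathcal{E}_{k,s}(u)^{1/2}\mathcal{E}_{k,s}(\phi)^{1/2}$, and $\mathcal{E}_{k,s}(\phi)\le \|(1+|\xi|^2)^{s/2}\mathcal F_k\phi\|_2^2$ is controlled by Schwartz seminorms, since $\mathcal F_k$ is continuous on $\mathcal S$. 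The right side is controlled by $\|g\|_2\|\phi\|_2$.

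By Plancherel, the identity becomes
\[
\int |\xi|^s\mathcal F_ku\,\overline{\mathcal F_k\phi}\,d\nu_k=\int \mathcal F_kg\,\overline{\mathcal F_k\phi}\,d\nu_k.
\]
Since $\mathcal F_k$ maps $\mathcal S$ onto $\mathcal S$, this holds for all $\psi=\mathcal F_k\phi\in\mathcal S$. Now $|\xi|^s\mathcal F_ku\in L^1_{\rm loc}$, because $|\xi|^{s/2}\mathcal F_ku\in L^2$ and $|\xi|^{s/2}$ is locally bounded. Hence the two locally integrable functions agree a.e.:
\[
|\xi|^s\mathcal F_ku=\mathcal F_kg\in L^2(\nu_k).
\]
Combining this with $u\in L^2$ (Lemma~\ref{lem:GN}), we get $\int(1+|\xi|^2)^s|\mathcal F_ku|^2\lesssim \|u\|_2^2+\|\mathcal F_k g\|_2^2<\infty$, so $u\in H^s_k$. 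By the definition of $A_k^{s/2}$ via the Dunkl transform, this is exactly \eqref{eq:strong-op}.

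For \eqref{eq:E-strong}, take $\psi\in X_{k,s}\subset L^2$ and write
\[
\mathcal{E}_{k,s}(u,\psi)=\int |\xi|^s\mathcal F_ku\,\overline{\mathcal F_k\psi}\,d\nu_k=\int\mathcal F_kg\,\overline{\mathcal F_k\psi}\,d\nu_k.
\]
The integral converges absolutely, both directly (both factors lie in $L^2$) and by Cauchy--Schwarz on the energies. Plancherel then gives $\int g\psi\,d\nu_k$, with real-valuedness removing the conjugate.

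Finally, assume $s\ge1$; we show $u\in H^1_k$. Split the frequency integral. On $\{|\xi|\le1\}$, $(1+|\xi|^2)|\mathcal F_ku|^2\le 2|\mathcal F_ku|^2$, which is integrable since $u\in L^2$. On $\{|\xi|>1\}$, $|\xi|^2\le|\xi|^{2s}$ because $s\ge1$, and $\int|\xi|^{2s}|\mathcal F_ku|^2=\|g\|_2^2<\infty$.

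The only delicate point is the identification step: passing from testing against $C_c^\infty$ (the definition of $\eta$) to the Fourier-side identity. This needs the Schwartz-density argument above, or alternatively a direct observation that $\mathcal F_k(C_c^\infty)$ is dense enough in $\mathcal S$, together with the local integrability of $|\xi|^s\mathcal F_ku$.
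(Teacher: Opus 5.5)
Your proof is correct and follows the same route as the paper: you bound $\mu$ in $L^1\cap L^\infty$, read \eqref{eq:mu-test} on the Dunkl-transform side to get $|\xi|^s\mathcal F_k u=\mathcal F_k(f-\mu)$, conclude $u\in H_k^s$ and \eqref{eq:E-strong} by Plancherel, and use $|\xi|^2\le|\xi|^{2s}$ for $|\xi|>1$ when $s\ge1$. The only difference is that you justify the identification step explicitly (extending from $C_c^\infty$ to $\mathcal S$ by density, then using local integrability of $|\xi|^s\mathcal F_k u$), which the paper compresses into ``distributionally'' and ``taking the Dunkl transform.''
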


\begin{proof}
Since $0\le\mu\le\lambda$ and $\mu\in L^1$, we have
$\mu\in L^2$ with
\[
  \left\lVert \mu\right\rVert_2^2\le\lambda\left\lVert \mu\right\rVert_1.
\]
Thus $f-\mu\in L^1\cap L^2$.
Equation \eqref{eq:mu-test} says, distributionally,
\[
  A_{k}^{s/2}u=f-\mu.
\]
Taking the Dunkl transform gives
\[
  \left\lvert \xi\right\rvert^{s}\mathcal{F}_{k}u(\xi)=\mathcal{F}_{k}(f-\mu)(\xi)
\]
almost everywhere.  The right side is in $L^2$, proving \eqref{eq:strong-op} and
$u\in H_k^s$.  Identity \eqref{eq:E-strong} follows by Plancherel.
If $s\ge1$, then
$1+\left\lvert \xi\right\rvert^2\le 2\bigl(1+\left\lvert \xi\right\rvert^{2s}\bigr)$, so $H_k^s\subset H_k^1$.
\end{proof}
In the next lemma, we show that the reaction term $\eta$ is active only where the minimizer $u$ vanishes, more precisely:
\begin{lemma}\label{lem:complementarity}
We have
\begin{equation}\label{eq:eta-u-zero}
  \eta u=0\qquad\text{almost everywhere}.
\end{equation}
Consequently,
\begin{equation}\label{eq:mu-lambda-omega}
  \mu=\lambda\qquad\text{almost everywhere on }\Omega:=\{u>0\}.
\end{equation}
\end{lemma}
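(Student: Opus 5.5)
The plan is to test the energy identity \eqref{eq:energy-identity} against the strong form \eqref{eq:E-strong}. From Lemma~\ref{lem:regularity} we have $u\in X_{k,s}$. Taking $\psi=u$ in \eqref{eq:E-strong} gives
\[
  \mathcal{E}_{k,s}(u)=\int_{\mathbb{R}^d}(f-\mu)u\,d\nu_k .
\]
Theorem~\ref{thm:VI} gives instead
\[
  \mathcal{E}_{k,s}(u)=\int_{\mathbb{R}^d}(f-\lambda)u\,d\nu_k .
\]
Subtracting, and using $\mu=\lambda-\eta$ from \eqref{eq:mu-def}, yields
\[
  \int_{\mathbb{R}^d}\eta\,u\,d\nu_k=0 .
\]
Both integrals are finite. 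Indeed $u\in L^1\cap L^2$ by Lemma~\ref{lem:GN}, $f\in L^2$, and $0\le\mu\le\lambda$, so every term is absolutely integrable. By Theorem~\ref{thm:LS} we also have $\eta\ge0$ a.e., and $u\ge0$ since $u\in\mathcal{K}_{k,s}$. Hence $\eta u$ is a nonnegative function with zero integral, so $\eta u=0$ a.e., which is \eqref{eq:eta-u-zero}.

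For \eqref{eq:mu-lambda-omega}, note that on $\Omega=\{u>0\}$ the identity $\eta u=0$ forces $\eta=0$ a.e. Therefore $\mu=\lambda-\eta=\lambda$ a.e. on $\Omega$.

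The main point to check is that the two identities may legitimately be compared. The issue is that \eqref{eq:E-strong} is stated for all $\psi\in X_{k,s}$, whereas $\eta$ was originally defined only through pairings with $C_c^\infty$ test functions in \eqref{eq:eta-def}. Since Lemma~\ref{lem:regularity} already identifies $A_k^{s/2}u=f-\mu$ strongly in $L^2$, Plancherel lets us pair with $u\in L^2$ directly:
\[
  \mathcal{E}_{k,s}(u,u)=\int |\xi|^s|\mathcal{F}_k u|^2\,d\nu_k=\langle A_k^{s/2}u,u\rangle_{L^2}.
\]
This is justified because $|\xi|^s\mathcal{F}_ku\in L^2$ and $\mathcal{F}_ku\in L^2$. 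So no further approximation argument is needed. If one prefers to avoid \eqref{eq:E-strong}, the alternative is to approximate $u$ by nonnegative $C_c^\infty$ functions in the energy and $L^2$ norms and pass to the limit in \eqref{eq:eta-def}; this works because $\eta\in L^\infty$ by \eqref{eq:eta-density}.
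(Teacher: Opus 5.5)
Your argument is correct and is essentially the paper's proof: take $\psi=u$ in \eqref{eq:E-strong}, subtract the energy identity \eqref{eq:energy-identity} to get $\int\eta u\,d\nu_k=0$, and conclude from $\eta\ge0$ and $u\ge0$. The only imprecise point is your closing aside, which the main argument does not need: pairing $\eta\in L^\infty$ with approximants that converge only in $L^2$ does not pass to the limit, so you would need $L^1$ convergence, or else work with \eqref{eq:mu-test} and $\mu\in L^1\cap L^\infty\subset L^2$ instead of \eqref{eq:eta-def}.
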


\begin{proof}
Taking $\psi=u$ in \eqref{eq:E-strong} gives
\[
  \mathcal{E}_{k,s}(u)=\int(f-\mu)u\,d\nu_k.
\]
On the other hand, \eqref{eq:energy-identity} gives
\[
  \mathcal{E}_{k,s}(u)=\int(f-\lambda)u\,d\nu_k.
\]
Subtracting yields
\[
  \int(\lambda-\mu)u\,d\nu_k=\int\eta u\,d\nu_k=0.
\]
Both factors are nonnegative, hence \eqref{eq:eta-u-zero} follows.  Since $u>0$ on $\Omega$,
we obtain $\eta=0$, equivalently $\mu=\lambda$, almost everywhere on $\Omega$.
\end{proof}

\section{Proof of the main theorems}\label{sec: man thm proof}

\begin{proof}[Proof of Theorem~\ref{thm:decomp}]
Let $u$ be the minimizer from Theorem~\ref{thm:minimizer}, let $\eta$ be defined by
\eqref{eq:eta-def}, and let $\mu=\lambda-\eta$.
The Lewy--Stampacchia estimate \eqref{eq:eta-density} gives $0\le\mu\le\lambda$.
Lemma~\ref{lem:mass} gives mass conservation, Lemma~\ref{lem:regularity} gives the
strong identity \eqref{eq:decomp} and $u\in H_k^s$, and
Lemma~\ref{lem:complementarity} gives $\mu=\lambda$ on $\Omega=\{u>0\}$.
Therefore
\[
  \lambda\nu_k(\Omega)
   =\int_\Omega\mu\,d\nu_k
   \le\left\lVert \mu\right\rVert_{L^1(\nu_k)}
   =\left\lVert f\right\rVert_{L^1(\nu_k)}.
\]

If $s\ge1$, Lemma~\ref{lem:regularity} gives $u\in H_k^1$.
Since $u\ge0$, the set $\{u\ne0\}$ agrees with $\Omega$ up to null sets, and
Lemma~\ref{lem:reflection-localization} yields
\eqref{eq:gradk-support} and \eqref{eq:omega-sharp-measure}.

Finally suppose $f$ is $G$-invariant.  The form $\mathcal{E}_{k,s}$, the measure $\nu_k$,
the cone $\mathcal{K}_{k,s}$, and the linear functional
$v\mapsto\int(f-\lambda)v\,d\nu_k$ are all $G$-invariant.  Hence for every $g\in G$,
$u\circ g$ is also a minimizer.  Uniqueness implies $u\circ g=u$.  It follows from
\eqref{eq:strong-op} that $\mu$ is $G$-invariant as well.  Thus $\Omega$ is
$G$-invariant and $\Omega^\sharp=\Omega$.
\end{proof}

We now take $s=1$ in Theorem~\ref{thm:decomp} and prove Theorem~\ref{thm:weak}.

\begin{proof}[Proof of Theorem~\ref{thm:weak}]
We first prove the estimate for
$f\in L^1(\nu_k)\cap L^2(\nu_k)$.  Write $f=f^+-f^-$ and fix $\lambda>0$.
Apply Theorem~\ref{thm:decomp} to $f^+$ and $f^-$ at level $\lambda$:
\[
  f^\pm=\mu^\pm+A_{k}^{1/2}u^\pm,
\]
where
\[
  0\le\mu^\pm\le\lambda,
  \qquad
  \left\lVert \mu^\pm\right\rVert_1=\left\lVert f^\pm\right\rVert_1,
\]
and, with $\Omega_\pm=\{u^\pm>0\}$,
\[
  \lambda\nu_k(\Omega_\pm)\le\left\lVert f^\pm\right\rVert_1.
\]
Set
\[
  \mu=\mu^+-\mu^- ,\qquad
  u=u^+-u^- ,\qquad
  \Omega=\Omega_+\cup\Omega_-.
\]
Then
\begin{equation}\label{eq:signed-decomp}
  f=\mu+A_{k}^{1/2}u.
\end{equation}
Applying $\mathcal{R}_{k}=\nabla_{k}A_{k}^{-1/2}$ in $L^2$ gives
\begin{equation}\label{eq:R-decomp}
  \mathcal{R}_{k} f=\mathcal{R}_{k}\mu+\nabla_{k} u.
\end{equation}

Define
\[
  \Omega^\sharp
  =\Omega\cup
   \bigcup_{\substack{\alpha\in R_+\\k(\alpha)>0}}\sigma_\alpha(\Omega).
\]
Lemma~\ref{lem:reflection-localization}, applied to $u^+$ and $u^-$, gives
$\nabla_{k} u=0$ almost everywhere off $\Omega^\sharp$.  Hence, up to a null set,
\begin{equation}\label{eq:level-inclusion}
  \{\left\lvert \mathcal{R}_{k} f\right\rvert_{\ell^2}>\lambda\}
  \subset
  \Omega^\sharp\cup\{\left\lvert \mathcal{R}_{k}\mu\right\rvert_{\ell^2}>\lambda\}.
\end{equation}
Since $\nu_k$ is reflection invariant,
\begin{align}
 \lambda\nu_k(\Omega^\sharp)
 &\le(M_k+1)\lambda\nu_k(\Omega)\notag\\
 &\le(M_k+1)
   \bigl(\left\lVert f^+\right\rVert_1+\left\lVert f^-\right\rVert_1\bigr)
  =(M_k+1)\left\lVert f\right\rVert_1.
 \label{eq:omega-final}
\end{align}
Also $\left\lvert \mu\right\rvert\le\lambda$ and
\[
  \int\left\lvert \mu\right\rvert\,d\nu_k
  \le\int(\mu^++\mu^-)\,d\nu_k
  =\left\lVert f\right\rVert_1,
\]
so
\begin{equation}\label{eq:mu-L2}
  \left\lVert \mu\right\rVert_2^2\le\lambda\left\lVert f\right\rVert_1.
\end{equation}
By Chebyshev and the $L^2$ isometry \eqref{eq:L2-isometry},
\begin{align}
 \lambda\nu_k\bigl(\{\left\lvert \mathcal{R}_{k}\mu\right\rvert_{\ell^2}>\lambda\}\bigr)
 &\le\lambda^{-1}\left\lVert \mathcal{R}_{k}\mu\right\rVert_2^2\\
 &=\lambda^{-1}\left\lVert \mu\right\rVert_2^2
 \le\left\lVert f\right\rVert_1.
 \label{eq:Rmu-final}
\end{align}
Combining \eqref{eq:level-inclusion}, \eqref{eq:omega-final}, and
\eqref{eq:Rmu-final} gives
\[
  \lambda\nu_k\bigl(\{\left\lvert \mathcal{R}_{k} f\right\rvert_{\ell^2}>\lambda\}\bigr)
  \le(M_k+2)\left\lVert f\right\rVert_1.
\]
Taking the supremum over $\lambda>0$ proves \eqref{eq:weak-main} on
$L^1\cap L^2$.  The standard weak-type extension principle for an operator initially
defined on $L^2$ then gives a unique extension to all of $L^1(\nu_k)$ with the same
constant.

If $f$ is $G$-invariant, then $f^\pm$, $u^\pm$, and $\Omega_\pm$ are $G$-invariant.
Thus $\Omega^\sharp=\Omega$ and the first term in \eqref{eq:omega-final} is bounded by
$\left\lVert f\right\rVert_1$ instead of $(M_k+1)\left\lVert f\right\rVert_1$.  This yields \eqref{eq:weak-inv}.
\end{proof}

\section{Application to Dunkl--Schr\"odinger Riesz transforms}\label{sec: application}

Let $V\geq 0$ be locally integrable with respect to $\nu_k$, and consider
the Dunkl--Schr\"odinger operator
\[
   \mathcal{L}_{k,V}:=-\Delta_k+V.
\]
We define the associated vector of the Dunkl--Schr\"odinger Riesz transforms by
\[
   \mathcal R_{k,V}f
   :=
   \bigl(
      T_1\mathcal{L}_{k,V}^{-1/2}f,\ldots,
      T_d\mathcal{L}_{k,V}^{-1/2}f
   \bigr).
\]
The semigroup perturbation argument used for Schr\"odinger operators in \cite{Dziubanski2026} gives
\begin{equation}\label{eq:dunkl-comparison}
   \bigl\|
      (-\Delta_k)^{1/2}\mathcal{L}_{k,V}^{-1/2}f
   \bigr\|_{L^1(\nu_k)}
   \leq 2\|f\|_{L^1(\nu_k)}.
\end{equation}
Indeed, the proof uses only the positivity and conservativity of the Dunkl
heat semigroup, its domination over the Schr\"odinger semigroup, and the
Duhamel perturbation formula. Thus, the argument of
\cite[Lemma~3.1]{Dziubanski2026} carries over to the Dunkl setting; see also
\cite[Remark~3.12]{Dziubanski2026}.

As a consequence of \eqref{eq:weak-main}, we obtain the following
vector-valued endpoint estimate.

\begin{corollary}\label{cor:dunkl-schrodinger}
For every real-valued $f\in L^1(\nu_k)$,
\[
   \|\mathcal R_{k,V}f\|_{L^{1,\infty}(\nu_k;\ell^2)}
   \leq 2(M_k+2)\|f\|_{L^1(\nu_k)}.
\]
In particular, the constant is independent of the nonnegative potential
$V$; its only dependence on the Dunkl structure is that already present
in $M_k$.
\end{corollary}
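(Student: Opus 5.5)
The plan is to factor the Dunkl--Schr\"odinger Riesz transform through the Dunkl--Riesz transform and then apply Theorem~\ref{thm:weak} to the intermediate function. For $f\in L^1(\nu_k)\cap L^2(\nu_k)$ set
\[
   g:=(-\Delta_k)^{1/2}\mathcal{L}_{k,V}^{-1/2}f .
\]
At least formally,
\[
   \mathcal R_{k,V}f=\nabla_k\mathcal{L}_{k,V}^{-1/2}f
   =\nabla_k(-\Delta_k)^{-1/2}(-\Delta_k)^{1/2}\mathcal{L}_{k,V}^{-1/2}f
   =\mathcal R_k g .
\]
By \eqref{eq:dunkl-comparison}, $\|g\|_{L^1(\nu_k)}\le 2\|f\|_{L^1(\nu_k)}$. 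The function $g$ is real-valued whenever $f$ is: $V\ge0$ is real, so $\mathcal{L}_{k,V}^{-1/2}$ preserves real functions, and $(-\Delta_k)^{1/2}$ has a real, even multiplier. Theorem~\ref{thm:weak} applied to $g$ then gives
\[
 \|\mathcal R_{k,V}f\|_{L^{1,\infty}(\nu_k;\ell^2)}=\|\mathcal R_kg\|_{L^{1,\infty}(\nu_k;\ell^2)}\le (M_k+2)\|g\|_{L^1(\nu_k)}\le 2(M_k+2)\|f\|_{L^1(\nu_k)} .
\]

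The main obstacle is justifying the factorization $\mathcal R_{k,V}f=\mathcal R_k g$ rigorously on a dense class, with $g$ lying in a space where $\mathcal R_k$ is defined as in Theorem~\ref{thm:weak}. I would argue in $L^2$ via the quadratic forms.

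\begin{enumerate}
\item Since $V\ge0$, the form of $\mathcal L_{k,V}$ dominates the Dunkl form, so $\|\nabla_k h\|_2^2\le\langle \mathcal L_{k,V}h,h\rangle$. Hence for $f\in L^2$ in the range of $\mathcal L_{k,V}^{1/2}$, for instance $f$ of the form $\mathcal L_{k,V}^{1/2}\psi$, the function $h=\mathcal L_{k,V}^{-1/2}f$ lies in the homogeneous Dunkl form domain with $\|\nabla_k h\|_2\le\|f\|_2$.
\item It follows that $g=(-\Delta_k)^{1/2}h\in L^2(\nu_k)$ with $\|g\|_2=\|\nabla_k h\|_2$. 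The identity $\nabla_k h=\nabla_k(-\Delta_k)^{-1/2}g=\mathcal R_kg$ then holds on the Dunkl transform side, because $i\xi_j\mathcal F_kh=(i\xi_j/|\xi|)\,|\xi|\mathcal F_kh$.
\item For $f\in L^1\cap L^2$ we also have $g\in L^1$ by \eqref{eq:dunkl-comparison}, so Theorem~\ref{thm:weak} applies in its $L^1\cap L^2$ form.
\end{enumerate}

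To finish, extend from this dense class to all of $L^1\cap L^2$. Approximate $f$ by $f_n=e^{-\varepsilon\mathcal L_{k,V}}f$ or a similar smoothing, for which $\mathcal L_{k,V}^{-1/2}f_n$ is well defined, and use that the weak-$L^1$ quasi-norm bound is stable under $L^2$ convergence of $\mathcal R_{k,V}f_n$ together with $L^1$ convergence of $f_n$. Then define $\mathcal R_{k,V}$ on $L^1(\nu_k)$ by the standard weak-type extension, exactly as at the end of the proof of Theorem~\ref{thm:weak}. The constant is independent of $V$ because $V$ enters only through the contraction bound \eqref{eq:dunkl-comparison}.
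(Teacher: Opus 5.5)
Your proposal follows the paper's proof: factor $\mathcal{R}_{k,V}f=\mathcal{R}_k\bigl((-\Delta_k)^{1/2}\mathcal{L}_{k,V}^{-1/2}f\bigr)$, apply Theorem~\ref{thm:weak}, then \eqref{eq:dunkl-comparison}, and your form-domain justification and your check that $g$ is real-valued (needed, since Theorem~\ref{thm:weak} is stated for real-valued data) supply details the paper leaves implicit. The only slip is in your final extension step, which is unnecessary anyway: $e^{-\varepsilon\mathcal{L}_{k,V}}f$ need not lie in the range of $\mathcal{L}_{k,V}^{1/2}$ (it already fails for $V=0$ and $d_k\le 2$, because the obstruction is at the bottom of the spectrum), but your steps 1--2 show that $\mathcal{R}_{k,V}$ and $\mathcal{R}_k(-\Delta_k)^{1/2}\mathcal{L}_{k,V}^{-1/2}$ are $L^2$ contractions agreeing on a dense subspace, so they agree on all of $L^1(\nu_k)\cap L^2(\nu_k)$ and no approximation of $f$ is needed.
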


\begin{proof}
Set
\[
   A_{k,V}:=(-\Delta_k)^{1/2}\mathcal{L}_{k,V}^{-1/2}.
\]
The functional-calculus factorization
\[
   T_j\mathcal{L}_{k,V}^{-1/2}
   =
   T_j(-\Delta_k)^{-1/2}A_{k,V},
   \qquad j=1,\ldots,d,
\]
gives
\[
   \mathcal R_{k,V}f=\mathcal R_k(A_{k,V}f).
\]
Applying \eqref{eq:weak-main} followed by
\eqref{eq:dunkl-comparison}, we obtain
\[
\begin{aligned}
   \|\mathcal R_{k,V}f\|_{L^{1,\infty}(\nu_k;\ell^2)}
   &\leq (M_k+2)\|A_{k,V}f\|_{L^1(\nu_k)}  \\
   &\leq 2(M_k+2)\|f\|_{L^1(\nu_k)}.
\end{aligned}
\]
The factorization is initially valid on
$L^1(\nu_k)\cap L^2(\nu_k)$; the general case follows by density and the
weak-type estimate.
\end{proof}

\begin{remark}
To the best of our knowledge, the dimension and potential-free full-vector weak-type $(1,1)$ estimate is new even for classical Schr\"odinger operators $L=-\Delta+V$ with arbitrary nonnegative potential locally integrable potential $V$. 
\end{remark}


\subsection*{Declaration of generative AI and AI-assisted technologies in the manuscript preparation process}

During the preparation of this manuscript, the author used OpenAI's ChatGPT solely for language editing and to improve the readability and clarity of the mathematical exposition. All mathematical statements and proofs were independently verified by the author. The author reviewed and revised all AI-assisted content as necessary and takes full responsibility for the final version of the article.

\subsection*{Acknowledgments} 
The author is supported by Institute Postdoctoral Fellowship from IIT Bombay.

\subsection*{Data availability}
Data availability is not applicable.

\subsection*{Competing interests}
The author declares that he has no competing interests.

\end{document}